\newtheorem{theorem}{Theorem}[section]
\newtheorem{proposition}[theorem]{Proposition}
\newtheorem{lemma}{Lemma}[section]
\newtheorem{remark}{Remark}[section]
\newtheorem{condition}{Condition}
\begin{document}
\title{Asymptotic results for the absorption time of telegraph
processes with elastic boundary at the origin\thanks{The authors
acknowledge the support of: GNAMPA and GNCS groups of INdAM
(Istituto Nazionale di Alta Matematica); MIUR--PRIN 2017, Project
‘Stochastic Models for Complex Systems’ (no. 2017JFFHSH); MIUR
Excellence Department Project awarded to the Department of
Mathematics, University of Rome Tor Vergata (CUP E83C18000100006).}}
\author{Claudio Macci\thanks{Dipartimento di Matematica,
Universit\`a di Roma Tor Vergata, Via della Ricerca Scientifica,
00133 Rome, Italy. e-mail: \texttt{macci@mat.uniroma2.it}}\and
Barbara Martinucci\thanks{Dipartimento di Matematica,
Universit\`{a} degli Studi di Salerno, Via Giovanni Paolo II n.
132, 84084 Fisciano, SA, Italy. e-mail:
\texttt{bmartinucci@unisa.it}}\and Enrica
Pirozzi\thanks{Dipartimento di Matematica e Applicazioni,
Universit\`{a} di Napoli Federico II, Via Cintia, Complesso Monte
S. Angelo, 80126 Naples, Italy. e-mail:
\texttt{enrica.pirozzi@unina.it}}}
\date{}
\maketitle
\begin{abstract}
\noindent We consider a telegraph process with elastic boundary
at the origin studied recently in the literature (see e.g.
\cite{DicrescenzoMartinucciZacks}). It is a particular random
motion with finite velocity which starts at $x\geq 0$, and its
dynamics is determined by upward and downward switching rates
$\lambda$ and $\mu$, with $\lambda>\mu$, and an absorption
probability (at the origin) $\alpha\in(0,1]$. Our aim is to study
the asymptotic behavior of the absorption time at the origin
with respect to two different scalings: $x\to\infty$ in the
first case; $\mu\to\infty$, with $\lambda=\beta\mu$ for some
$\beta>1$ and $x>0$, in the second case. We prove several large
and moderate deviation results. We also present numerical estimates
of $\beta$ based on an asymptotic Normality result for the case of
the second scaling.\\

\noindent\emph{Keywords:} finite velocity, random motion, large
deviations, moderate deviations.\\
\emph{Mathematical Subject Classification:} 60F10, 60J25, 60K15.
\end{abstract}

\section{Introduction}
The (integrated) telegraph process is an alternating random motion
with finite velocity, and has several applications in different
fields (for instance in physics, finance and mathematical biology).
The literature on the telegraph process and its generalizations
is quite large. In this paper we refer to a recent model with
elastic barrier studied in \cite{DicrescenzoMartinucciZacks}, where
it is possible find several references. Here we only recall some of
them.

We start with some references that studied the solution of the
telegraph equation; see e.g. \cite{Goldstein1951} and \cite{Kac1974}.
Among the first references that studied some probabilistic aspects,
we recall \cite{Orsingher1990} and \cite{Foong1992}. Moreover we also
cite \cite{Orsingher1995} and \cite{Ratanov1997} where the telegraph
process in the presence of reflecting and absorbing barriers was
investigated. Among the more recent references with some generalizations,
we recall \cite{StadjeZacks2004} for a model with random velocities,
\cite{DicrescenzoMartinucci2010} for a damped telegraph process,
\cite{Crimaldietal2013} for a model driven by certain random trials,
\cite{DicrescenzoZacks2015} for a telegraph process perturbed by a
Brownian motion, \cite{DegregorioOrsingher2011} and
\cite{GarraOrsingher2014} for certain multivariate extensions, and
\cite{Ratanov2015} for a model with jumps having some interest in
finance. Finally, since in this paper we prove results on large
deviations, we also recall \cite{Macci2016} (see also some references
cited therein) and the previous paper \cite{Macci2011}.

We also cite some references on stochastic processes with
elastic barriers: \cite{Domine1995} and \cite{Domine1996} for the
Wiener process, \cite{Giornoetal2006} for some diffusion processes
and, more recently, \cite{Jacob2012} and \cite{Jacob2013} for the
Langevin process.

Now we describe the stochastic process studied in
\cite{DicrescenzoMartinucciZacks}. It represents a random motion
of a particle on the half-line $[0,\infty)$. The particle moves up
and down in an alternating way; moreover it has velocity $1$ for
the upward periods, and it has velocity $-1$ for the downward periods.
Initially the motion proceeds upward for a positive random time
$U_1$ and, after that, the particle moves downward for a positive
random time $D_1$; moreover the motion alternates the random times
$U_2,D_2,U_3,D_3,\ldots$, where $\{U_n:n\geq 1\}$ and
$\{D_n:n\geq 1\}$ are independent sequences of i.i.d. positive
random variables. Furthermore, when the particle hits the origin,
it can be either absorbed or reflected upwards with probabilities
$\alpha$ and $1-\alpha$, respectively (here $\alpha\in(0,1)$ but,
actually, the case $\alpha=1$ is also allowed). More precisely, if
during a downward period $D_j$, say, the particle reaches the
origin and it is not absorbed, then instantaneously the particle
starts an upward period for an independent random time $U_{j+1}$.
We also remark that, here, we restrict our attention on the case
in which the random variables $\{U_n:n\geq 1\}$ and
$\{D_n:n\geq 1\}$ are exponentially distributed with parameters
$\lambda$ and $\mu$, respectively; moreover we assume that
$\lambda>\mu$ and this guarantees that
$\mathbb{E}[D_1]>\mathbb{E}[U_1]$.

In particular we are interested in the random variable
$A_x=A_x(\lambda,\mu)$, i.e. the absorption time of the particle
when it starts at $x$; see equation (2) in
\cite{DicrescenzoMartinucciZacks}. The aim is to study the
asymptotic behavior of that random variable with respect two
different scalings:
\begin{itemize}
\item \textbf{Scaling 1:} $x\to\infty$;
\item \textbf{Scaling 2:} $\mu\to\infty$, and $\lambda=\beta\mu$ for some fixed $\beta>1$ and $x>0$.
\end{itemize}
This will be done by referring to the theory of large
deviations (see e.g. \cite{DemboZeitouni} as a reference on
this topic). This theory allows to give an evaluation of
probabilities of rare events on an exponential scale. Some
preliminaries on this topic will be recalled in the next
section \ref{sec:preliminaries}.

Some of these asymptotic results concern moderate deviations;
this term is used in the literature when one has a class of
large deviation principles which fills the gap between the
convergence to a constant (typically governed by a large
deviation principle) and an asymptotic Normality result
(see Remarks \ref{rem:MD1} and \ref{rem:MD2}). Interestingly
in this paper we can also present a non-central moderate
deviation result as $\mu\to\infty$ stated in Proposition
\ref{prop:MD2-nc}; this means that we have a class of large
deviation principles that fills the gap between the convergence
to a constant and the weak convergence to a non-Gaussian limit
(see Remark \ref{rem:MD2-nc}).

In this paper we also present some numerical estimates for $\beta$
(approximate confidence intervals and point estimations)
obtained by simulations and based on an asymptotic Normality result
as $\mu\to\infty$; moreover, as far as the scaling 1 is concerned,
we also study the case in which the particle starts at some
independent random point $Y(x)$.

We conclude with the outline of the paper. We start with some
preliminaries in Section \ref{sec:preliminaries}. The results
are proved in Section \ref{sec:scaling-1} (for the scaling 1)
and in Section \ref{sec:scaling-2} (for the scaling 2). Finally,
in Section \ref{sec:numerical-estimates}, we present the
numerical estimates.

\section{Preliminaries}\label{sec:preliminaries}
We start with some preliminaries on large deviations. Then
we conclude with some details on the model studied in
\cite{DicrescenzoMartinucciZacks}; actually we recall some
preliminaries on the absorption time $A_x=A_x(\lambda,\mu)$.

\subsection{Preliminaries on large deviations}
We start with some basic definitions (see e.g. \cite{DemboZeitouni},
pages 4-5). Let $\mathcal{Z}$ be a topological space equipped with
its completed Borel $\sigma$-field. A family of $\mathcal{Z}$-valued
random variables $\{Z_r:r>0\}$ (defined on the same probability space
$(\Omega,\mathcal{F},P)$) satisfies the large deviation principle
(LDP for short) with speed function $v_r$ and rate function $I$ if:
$\lim_{r\to\infty}v_r=\infty$; the function
$I:\mathcal{Z}\to[0,\infty]$ is lower semi-continuous;
\begin{equation}\label{eq:UB-LDP-definition}
\limsup_{n\to\infty}\frac{1}{v_r}\log P(Z_r\in F)\leq-\inf_{z\in F}I(z)\ \mbox{for all closed sets}\ F;
\end{equation}
\begin{equation}\label{eq:LB-LDP-definition}
\liminf_{r\to\infty}\frac{1}{v_r}\log P(Z_r\in G)\geq-\inf_{z\in G}I(z)\ \mbox{for all open sets}\ G.
\end{equation}
A rate function $I$ is said to be \emph{good} if its level sets
$\{\{z\in\mathcal{Z}:I(z)\leq\eta\}:\eta\geq 0\}$ are compact.

Throughout this paper we prove LDPs with $\mathcal{Z}=\mathbb{R}$.
We recall the following known result for future use.

\begin{theorem}[G\"{a}rtner Ellis Theorem (on $\mathbb{R}$); see e.g. Theorem 2.3.6 in \cite{DemboZeitouni}]\label{th:GE}
Let $\{Z_r:r>0\}$ be a family of real valued random variables
(defined on the same probability space $(\Omega,\mathcal{F},P)$).
Assume that the function $\Lambda:\mathbb{R}\to(-\infty,\infty]$
defined by
$$\Lambda(s):=\lim_{r\to\infty}\frac{1}{v_r}\log\mathbb{E}\left[e^{v_rs Z_r}\right]\ (\mbox{for all}\ s\in\mathbb{R})$$
exists, and it is finite in a neighborhood of the origin $s=0$.
Moreover let
$\Lambda^*:\mathbb{R}\to[0,\infty]$ defined by
$$\Lambda^*(z):=\sup_{s\in\mathbb{R}}\{sz-\Lambda(s)\}.$$
Then: \eqref{eq:UB-LDP-definition} holds with $I=\Lambda^*$;
a weak form of \eqref{eq:LB-LDP-definition} with $I=\Lambda^*$
holds, i.e.
$$\liminf_{r\to\infty}\frac{1}{v_r}\log P(Z_r\in G)\geq-\inf_{z\in G\cap\mathcal{E}}\Lambda^*(z)\ \mbox{for all open sets}\ G$$
where $\mathcal{E}$ is the set of exposed points of $I$ (namely the
points in which $I$ is finite and strictly convex); if $\Lambda$
is essentially smooth and lower semi-continuous, then the LDP holds
with good rate function $I=\Lambda^*$.
\end{theorem}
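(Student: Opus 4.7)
The plan is to prove the theorem in three movements: the Chernoff upper bound for all closed sets, a weak lower bound at exposed points via exponential tilting, and an upgrade to the full LDP under essential smoothness and lower semicontinuity of $\Lambda$.

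For the upper bound, I would begin with the Markov inequality: for $s\geq 0$ and any $z\in\mathbb{R}$, $P(Z_r\geq z)\leq e^{-v_r sz}\mathbb{E}[e^{v_r sZ_r}]$, so taking $\frac{1}{v_r}\log$, passing to the limit, and optimizing over $s\geq 0$ gives $\limsup\frac{1}{v_r}\log P(Z_r\geq z)\leq -\Lambda^*(z)$, with the symmetric bound for $(-\infty,z]$ coming from $s\leq 0$. To promote these half-line estimates to general closed sets, I would first establish exponential tightness: picking $s_0>0$ small enough that $\Lambda(\pm s_0)<\infty$, another Chernoff bound yields $\limsup\frac{1}{v_r}\log P(|Z_r|>M)\to-\infty$ as $M\to\infty$. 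For a closed set $F$, splitting into $F\cap[-M,M]$ (handled by finitely many half-line bounds) and $F\cap\{|z|>M\}$ (handled by tightness), then letting $M\to\infty$, delivers \eqref{eq:UB-LDP-definition} with $I=\Lambda^*$.

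The key step is the weak lower bound. At an exposed point $z\in\mathcal{E}$ with exposing slope $\eta$, I would introduce the exponentially tilted measures $\tilde{P}_r$ defined by $d\tilde{P}_r/dP=e^{v_r\eta Z_r}/\mathbb{E}[e^{v_r\eta Z_r}]$. For $\delta>0$, rewriting $P(|Z_r-z|<\delta)$ as an integral with respect to $\tilde{P}_r$ and bounding the Radon--Nikodym factor crudely on the interval yields $\frac{1}{v_r}\log P(|Z_r-z|<\delta)\geq -\eta z-|\eta|\delta+\Lambda(\eta)+\frac{1}{v_r}\log\tilde{P}_r(|Z_r-z|<\delta)$. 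The main obstacle is to show that the last term tends to $0$, i.e.\ that the tilted laws concentrate at $z$. For this I would apply the upper bound already proved to $Z_r$ under $\tilde{P}_r$, whose limiting logarithmic moment generating function is $\Lambda(\cdot+\eta)-\Lambda(\eta)$; the exposing property of $\eta$ makes $z$ the unique zero of the corresponding rate function, which forces concentration. Letting $\delta\to 0$ and using $\eta z-\Lambda(\eta)=\Lambda^*(z)$ gives the weak lower bound.

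Finally, to remove the restriction to $\mathcal{E}$ under essential smoothness and lower semicontinuity of $\Lambda$, I would invoke standard convex-analytic facts: for such $\Lambda$, at every $z$ in the interior of $\mathrm{dom}\,\Lambda^*$ the supremum in $\Lambda^*(z)=\sup_s\{sz-\Lambda(s)\}$ is attained at some $\eta\in\mathrm{int}(\mathrm{dom}\,\Lambda)$ at which $\Lambda$ is differentiable, and any such $z$ is automatically exposed; steepness at the boundary of $\mathrm{dom}\,\Lambda$ is what prevents the supremum from escaping to infinity. Hence $\mathcal{E}$ contains the interior of $\{\Lambda^*<\infty\}$, and by lower semicontinuity of $\Lambda^*$ the weak lower bound implies the full lower bound \eqref{eq:LB-LDP-definition}. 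Goodness of $\Lambda^*$ follows from the superlinear growth inherited from $\Lambda$ being finite near $0$, which makes all level sets bounded and hence compact in $\mathbb{R}$.
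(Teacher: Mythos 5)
The paper does not prove this statement at all: it is quoted verbatim as a known result (Theorem 2.3.6 of \cite{DemboZeitouni}) and used as a black box in the later propositions, so there is no in-paper argument to compare against. Your outline is, in substance, the standard textbook proof of the G\"artner--Ellis theorem and is essentially sound: Chernoff bounds plus exponential tightness for \eqref{eq:UB-LDP-definition}, exponential tilting with concentration of the tilted laws for the weak lower bound, and Rockafellar-type convex analysis to pass from exposed points to all of $\mathrm{dom}\,\Lambda^*$ under essential smoothness.

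Two points deserve tightening. First, for the concentration step you need the exposing slope $\eta$ to lie in the \emph{interior} of $\mathcal{D}_\Lambda$; otherwise $\Lambda(\cdot+\eta)-\Lambda(\eta)$ need not be finite in a neighborhood of $0$ and the upper bound cannot be applied to the tilted family. This is exactly why \cite{DemboZeitouni} restricts the weak lower bound to exposed points with exposing hyperplane in $\mathrm{int}\,\mathcal{D}_\Lambda$, and why steepness is later needed to show that such points exhaust $\mathrm{ri}(\mathrm{dom}\,\Lambda^*)$; the paper's informal description of $\mathcal{E}$ (``finite and strictly convex'') glosses over this, and so do you. Second, your final appeal to ``lower semicontinuity of $\Lambda^*$'' points the wrong way: to get $\inf_{G}\Lambda^*\geq\inf_{G\cap\mathcal{E}}\Lambda^*$ for open $G$ one approximates a point $z\in G$ with $\Lambda^*(z)<\infty$ by points $z_t=(1-t)z+ty$ with $y\in\mathrm{ri}(\mathrm{dom}\,\Lambda^*)$, and uses convexity to get $\limsup_{t\downarrow 0}\Lambda^*(z_t)\leq\Lambda^*(z)$, i.e.\ upper semicontinuity along the segment, not lower semicontinuity. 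These are repairable imprecisions in an otherwise correct reconstruction of the cited proof.
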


We also recall that $\Lambda$ in the above statement is essentially
smooth (see e.g. Definition 2.3.5 in \cite{DemboZeitouni}) if
the interior of the set $\mathcal{D}_\Lambda:=\{s\in\mathbb{R}:
\Lambda(s)<\infty\}$ is non-empty, if it is differentiable
throughout the interior of that set, and if it is a steep function
(namely $|\Lambda^\prime(s)|$ tends to infinity when $s$ in the
interior of $\mathcal{D}_\Lambda$ approaches any finite point of
its boundary).

\subsection{Preliminaries on the model}
We start with a slight correction of in Proposition 9 in
\cite{DicrescenzoMartinucciZacks} for the moment generating
function of the absorption time $A_x=A_x(\lambda,\mu)$. In
what follows we consider the function
\begin{equation}\label{eq:def-Lambda}
\Lambda(s;\lambda,\mu):=\frac{1}{2}\left(\lambda-\mu-\sqrt{(\lambda+\mu-2s)^2-4\lambda\mu}\right)
\end{equation}
for $s\leq\frac{(\sqrt{\lambda}-\sqrt{\mu})^2}{2}$. Throughout
this paper we use the symbols $\Lambda^\prime(s;\lambda,\mu)$ and
$\Lambda^{\prime\prime}(s;\lambda,\mu)$ for the first and the
second derivatives of $\Lambda(s;\lambda,\mu)$ with respect to
$s$.

\begin{remark}[The moment generating function of $A_x=A_x(\lambda,\mu)$]\label{rem:correction-mgf}
Proposition 9 in \cite{DicrescenzoMartinucciZacks} provides an
expression of the moment generating function
$\mathbb{E}\left[e^{sA_x(\lambda,\mu)}\right]$ when
$s\leq\frac{(\sqrt{\lambda}-\sqrt{\mu})^2}{2}$ (it is equal to
infinity otherwise). Actually a possible further restriction on
$s$ is needed, i.e. $(1-\alpha)\mathbb{E}[e^{sC_0}]<1$, where
$\mathbb{E}[e^{sC_0}]$ is the moment generating
function of the renewal cycles $\{C_{0,i}:i\geq 1\}$
introduced in \cite{DicrescenzoMartinucciZacks}. Then, since
$\mathbb{E}\left[e^{sC_0}\right]\uparrow\sqrt{\frac{\lambda}{\mu}}$
as $s\uparrow\frac{(\sqrt{\lambda}-\sqrt{\mu})^2}{2}$, we
distinguish two cases:
\begin{itemize}
\item if $(1-\alpha)\sqrt{\frac{\lambda}{\mu}}<1$ or, equivalently, if
$\alpha>1-\sqrt{\frac{\mu}{\lambda}}$, then we recover the expression
in Proposition 9 in \cite{DicrescenzoMartinucciZacks}, i.e.
$$\mathbb{E}\left[e^{sA_x(\lambda,\mu)}\right]=\left\{\begin{array}{ll}
\frac{2\alpha\lambda e^{x\Lambda(s;\lambda,\mu)}}{2\lambda(\alpha-1)+\lambda+\mu-2s+\sqrt{(\lambda+\mu-2s)^2-4\lambda\mu}}
&\ \mbox{for}\ s\leq\frac{(\sqrt{\lambda}-\sqrt{\mu})^2}{2}\\
\infty&\ \mbox{otherwise};
\end{array}\right.$$
\item if $(1-\alpha)\sqrt{\frac{\lambda}{\mu}}\geq 1$ or, equivalently, if
$\alpha\leq 1-\sqrt{\frac{\mu}{\lambda}}$, then we have
$$\mathbb{E}\left[e^{sA_x(\lambda,\mu)}\right]=\left\{\begin{array}{ll}
\frac{2\alpha\lambda e^{x\Lambda(s;\lambda,\mu)}}{2\lambda(\alpha-1)+\lambda+\mu-2s+\sqrt{(\lambda+\mu-2s)^2-4\lambda\mu}}
&\ \mbox{for}\ s<\hat{s}(\lambda,\mu,\alpha)\\
\infty&\ \mbox{otherwise},
\end{array}\right.$$
where $\hat{s}(\lambda,\mu,\alpha):=\frac{\alpha(\lambda(1-\alpha)-\mu)}{2(1-\alpha)}
\in(0,\frac{(\sqrt{\lambda}-\sqrt{\mu})^2}{2}]$; in particular we have
$\hat{s}(\lambda,\mu,\alpha)=\frac{(\sqrt{\lambda}-\sqrt{\mu})^2}{2}$ if
$(1-\alpha)\sqrt{\frac{\lambda}{\mu}}=1$ or, equivalently, if
$\alpha=1-\sqrt{\frac{\mu}{\lambda}}$.
\end{itemize}
\end{remark}

Now we discuss some technical details on the function $\Lambda(\cdot)=
\Lambda(\cdot;\lambda,\mu)$ (in particular the concept of steepness in
the definition of essentially smooth function).

\begin{remark}[Some properties of $\Lambda(\cdot;\lambda,\mu)$]\label{rem:technical-detail-for-GET}
The function $\Lambda(\cdot)=\Lambda(\cdot;\lambda,\mu)$ plays a
crucial role in some applications of the G\"artner Ellis Theorem
in this paper. In particular, by referring to Remark
\ref{rem:correction-mgf}, it is a lower semi-continuous function
if $\alpha>1-\sqrt{\frac{\mu}{\lambda}}$, and it is an
essentially smooth function if $\alpha\geq 1-\sqrt{\frac{\mu}{\lambda}}$.
\end{remark}

In view of some results presented below, we compute the following Legendre
transforms:
$$\Lambda^*(z;\lambda,\mu):=\sup_{s\in\mathbb{R}}\{sz-\Lambda(s;\lambda,\mu)\}
=\sup_{s\leq\frac{(\sqrt{\lambda}-\sqrt{\mu})^2}{2}}\{sz-\Lambda(s;\lambda,\mu)\}$$
and, if we consider $\hat{s}(\lambda,\mu,\alpha)$ in Remark \ref{rem:correction-mgf}
for $\alpha\leq 1-\sqrt{\frac{\mu}{\lambda}}$,
$$\Lambda^*(z;\lambda,\mu,\alpha):=\sup_{s\leq\hat{s}(\lambda,\mu,\alpha)}\{sz-\Lambda(s;\lambda,\mu)\}.$$

\begin{lemma}[Computation of Legendre transforms]\label{lem:Legendre}
We have
$$\Lambda^*(z;\lambda,\mu)=\left\{\begin{array}{ll}
\frac{1}{2}\left(\sqrt{(z-1)\lambda}-\sqrt{(z+1)\mu}\right)^2&\ \mbox{if}\ z\geq 1\\
\infty&\ \mbox{otherwise}.
\end{array}\right.$$
Moreover, for $\alpha\leq 1-\sqrt{\frac{\mu}{\lambda}}$, if we set
$\tilde{z}(\lambda,\mu,\alpha):=\frac{\lambda+\mu-2\hat{s}(\lambda,\mu,\alpha)}
{\sqrt{(\lambda+\mu-2\hat{s}(\lambda,\mu,\alpha))^2-4\lambda\mu}}$ for
$\hat{s}(\lambda,\mu,\alpha)$ as in Remark \ref{rem:correction-mgf}, then
\begin{multline*}
\Lambda^*(z;\lambda,\mu,\alpha)=\left\{\begin{array}{ll}
\Lambda^*(z;\lambda,\mu)&\ \mbox{if}\ z\leq\tilde{z}(\lambda,\mu,\alpha)\\
\hat{s}(\lambda,\mu,\alpha)z-\Lambda(\hat{s}(\lambda,\mu,\alpha);\lambda,\mu)&\ \mbox{otherwise}
\end{array}\right.\\
=\left\{\begin{array}{ll}
\infty&\ \mbox{if}\ z<1\\
\frac{1}{2}\left(\sqrt{(z-1)\lambda}-\sqrt{(z+1)\mu}\right)^2&\ \mbox{if}\ 1\leq z\leq\tilde{z}(\lambda,\mu,\alpha)\\
\hat{s}(\lambda,\mu,\alpha)z-\Lambda(\hat{s}(\lambda,\mu,\alpha);\lambda,\mu)&\ \mbox{if}\ z>\tilde{z}(\lambda,\mu,\alpha).
\end{array}\right.
\end{multline*}
\end{lemma}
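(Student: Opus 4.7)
The plan is a classical calculus computation based on the convexity of $\Lambda(\cdot;\lambda,\mu)$. First I would record the two key facts about the function $\Lambda(s)=\Lambda(s;\lambda,\mu)$ on its effective domain $\bigl(-\infty,\frac{(\sqrt\lambda-\sqrt\mu)^2}{2}\bigr]$: a direct differentiation gives
$$\Lambda'(s;\lambda,\mu)=\frac{\lambda+\mu-2s}{\sqrt{(\lambda+\mu-2s)^2-4\lambda\mu}},$$
and $\Lambda$ is (strictly) convex on its domain. A quick check of the behavior of $\Lambda'$ shows that $\Lambda'(s)\downarrow 1$ as $s\to-\infty$ and $\Lambda'(s)\uparrow\infty$ as $s\uparrow\frac{(\sqrt\lambda-\sqrt\mu)^2}{2}$, so the range of $\Lambda'$ on the interior of the domain is $(1,\infty)$.

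For the first formula I would split on the value of $z$. For $z<1$, an asymptotic expansion of $\Lambda(s)$ as $s\to-\infty$ (using $\sqrt{(\lambda+\mu-2s)^2-4\lambda\mu}=(\lambda+\mu-2s)-\frac{2\lambda\mu}{\lambda+\mu-2s}+O(|s|^{-2})$) shows that $sz-\Lambda(s)\sim(z-1)s+\mu\to+\infty$, so $\Lambda^*(z;\lambda,\mu)=\infty$. For $z\geq 1$ the supremum is attained (at the finite boundary point when $z=1$ in a limit sense, and at a critical interior point when $z>1$). Setting $\Lambda'(s)=z$ and letting $u:=\lambda+\mu-2s$, the equation $z\sqrt{u^2-4\lambda\mu}=u$ yields $u=\frac{2\sqrt{\lambda\mu}\,z}{\sqrt{z^2-1}}$, hence
$$s^*=\frac{\lambda+\mu}{2}-\frac{\sqrt{\lambda\mu}\,z}{\sqrt{z^2-1}},\qquad \Lambda(s^*;\lambda,\mu)=\frac{\lambda-\mu}{2}-\frac{\sqrt{\lambda\mu}}{\sqrt{z^2-1}}.$$
Plugging into $s^*z-\Lambda(s^*;\lambda,\mu)$ and simplifying gives $\frac12(\sqrt{\lambda(z-1)}-\sqrt{\mu(z+1)})^2$; the $z=1$ case is recovered by continuity (and agrees with $\mu$).

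For the constrained Legendre transform $\Lambda^*(z;\lambda,\mu,\alpha)$, the key observation is that $s\mapsto sz-\Lambda(s;\lambda,\mu)$ is concave, so the constrained maximizer is either the unconstrained optimizer $s^*(z)$ (when $s^*(z)\leq\hat s(\lambda,\mu,\alpha)$) or the boundary point $\hat s(\lambda,\mu,\alpha)$ (when $s^*(z)>\hat s(\lambda,\mu,\alpha)$). Since $s\mapsto\Lambda'(s)$ is strictly increasing, the switchover happens exactly at $z=\Lambda'(\hat s(\lambda,\mu,\alpha);\lambda,\mu)$, and a direct substitution shows this equals $\tilde z(\lambda,\mu,\alpha)$ as defined in the statement. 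This gives the two-piece formula: $\Lambda^*(z;\lambda,\mu)$ for $z\leq\tilde z(\lambda,\mu,\alpha)$ and $\hat s(\lambda,\mu,\alpha)z-\Lambda(\hat s(\lambda,\mu,\alpha);\lambda,\mu)$ otherwise. The second displayed formula is then obtained by substituting the explicit value of $\Lambda^*(z;\lambda,\mu)$ already computed.

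No step is a genuine obstacle; the only mild care needed is the behavior at the boundaries of the domain (justifying the $z<1$ divergence and the $z=1$ limit), and tracking that the argument under the square root in $\Lambda'$ is positive on the interior so that the critical-point equation admits the unique root identified above. The rest is algebraic simplification of the form $\frac{1}{2}(\sqrt{a}-\sqrt{b})^2 = \frac{a+b}{2}-\sqrt{ab}$.
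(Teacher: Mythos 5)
Your proposal is correct and follows essentially the same route as the paper's proof: solve $z=\Lambda'(s;\lambda,\mu)$ for $z>1$ (arriving at the same critical point $s^*=s_z$), treat $z\leq 1$ via the limit $s\to-\infty$, and for the constrained transform move the optimizer to the boundary point $\hat{s}(\lambda,\mu,\alpha)$ exactly when $z>\tilde{z}(\lambda,\mu,\alpha)=\Lambda'(\hat{s}(\lambda,\mu,\alpha);\lambda,\mu)$. You in fact supply more of the intermediate calculus (the explicit formula for $\Lambda'$, the asymptotics as $s\to-\infty$, and the concavity argument) than the paper, which leaves these as "one can check" steps.
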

\begin{proof}
We start with the first statement concerning $\Lambda^*(z;\lambda,\mu)$. For $z>1$
one can check that the equation $z=\Lambda^\prime(s;\lambda,\mu)$ has solution
$s=s_z:=\frac{1}{2}\left(\lambda+\mu-2z\sqrt{\frac{\lambda\mu}{z^2-1}}\right)$,
and
$$\Lambda^*(z;\lambda,\mu)=s_zz-\Lambda(s_z;\lambda,\mu)$$
yields the desired expression; for $z\leq 1$ one can check that
$$\Lambda^*(z;\lambda,\mu)=\lim_{s\to -\infty}sz-\Lambda(s;\lambda,\mu),$$
which yields again the desired expression (one has to distinguish
the cases $z=1$ and $z<1$).

For $\Lambda^*(z;\lambda,\mu,\alpha)$ we proceed in the same way, and we omit
some details. Some computations coincide with the ones presented above but,
if $s_z$ above is larger than $\hat{s}(\lambda,\mu,\alpha)$, then the supremum
is attained at $s=\hat{s}(\lambda,\mu,\alpha)$. Moreover one can check that
$s_z>\hat{s}(\lambda,\mu,\alpha)$ if and only if $z>\tilde{z}(\lambda,\mu,\alpha)$.
The desired expression can be checked with straightforward computations.
\end{proof}

Finally, in the next Remark \ref{rem:correction-variance}, we
recall some formulas already presented in
\cite{DicrescenzoMartinucciZacks}; actually we give the
corrected expression of the variance.

\begin{remark}[A correction of a variance formula in \cite{DicrescenzoMartinucciZacks}]\label{rem:correction-variance}
Here we give the correct version of some formulas in
\cite{DicrescenzoMartinucciZacks}. More precisely we mean the
$n$-th moments of $C_x$ and $A_x$, i.e.
\begin{eqnarray*}
&& \hspace*{-0.2cm} \mathbb E[C_x^n]=\frac{\lambda}{\lambda+\mu}
{\rm e}^{\frac{x}{2} (\lambda-\mu) } \frac{2^n
n!}{(\lambda+\mu)^n}
\sum_{h=0}^{n} \left(-\frac{\lambda+\mu}{(\sqrt{\lambda}-\sqrt{\mu})^2} \right)^h
\nonumber \\
&& \hspace*{0.4cm}
\times \;
{}_{2}F_{1}\left(\frac{1+n-h}{2},\frac{2+n-h}{2};2;\frac{4\lambda
\mu}{(\lambda+\mu)^2} \right) \nonumber
\\
&& \hspace*{0.4cm} \times \sum_{j=0}^{+\infty}
\left[-\frac{(\lambda-\mu) x}{2} \right]^j \frac{1}{j!} {j/2
\choose h}  \; {}_{2}F_{1}\left(-h,-\frac{j}{2};\frac{j}{2}+1-h;
\left(\frac{\sqrt{\lambda}-\sqrt{\mu}}{\sqrt{\lambda}+\sqrt{\mu}}\right)^2
\right), \nonumber
\end{eqnarray*}
and
\begin{eqnarray*}
&& \hspace*{-0.2cm} \mathbb E[A_x^n]=2 \alpha \lambda n! {\rm
e}^{\frac{x}{2} (\lambda-\mu) }
\sum_{h=0}^{n} \left(-\frac{2}{(\sqrt{\lambda}-\sqrt{\mu})^2} \right)^h \frac{(8 \lambda (\alpha-1))^{n-h}}{(4 \lambda \alpha (\mu+ \lambda (\alpha-1)))^{n-h+1}}
\nonumber
\\
&& \hspace*{0.4cm} \times \left[ 2 \mu+2 \lambda
(\alpha-1)+\frac{2 \lambda \mu}{\lambda+\mu} \sum_{m=1}^{n-h}
\left(\frac{\alpha \mu+\alpha \lambda (\alpha-1)
}{(\alpha-1)(\lambda+\mu)} \right)^m\; \right. \nonumber
\\
&& \hspace*{0.4cm} \times \; \left.
{}_{2}F_{1}\left(\frac{m+1}{2},\frac{m+2}{2};2; \frac{4\lambda \mu}{(\lambda+\mu)^2} \right)
\right] \nonumber
\\
&& \hspace*{0.4cm} \times \sum_{j=0}^{+\infty}
\left[-\frac{(\lambda-\mu) x}{2} \right]^j \frac{1}{j!} {j/2
\choose h}\; {}_{2}F_{1}\left(-h,-\frac{j}{2};\frac{j}{2}+1-h;
\left(\frac{\sqrt{\lambda}-\sqrt{\mu}}{\sqrt{\lambda}+\sqrt{\mu}}\right)^2
\right).
\nonumber \\
&&
\end{eqnarray*}
In particular we also recall the correct expressions in
Proposition 12 in \cite{DicrescenzoMartinucciZacks} (actually only
the variances should be corrected):
$$\mathbb{E}[C_x]=\frac{2+(\lambda+\mu)x}{\lambda-\mu}\ \mbox{and}\ \mathrm{Var}[C_x]=\frac{4(\lambda+\mu+2\lambda\mu x)}{(\lambda-\mu)^3};$$
\begin{equation}\label{eq:mean-variance}
\mathbb{E}[A_x]=\frac{2+\alpha(\lambda+\mu)x}{\alpha(\lambda-\mu)}\
\mbox{and}\ \mathrm{Var}[A_x]=\frac{4(\lambda+2\lambda\mu
x\alpha^2+\mu(2\alpha-1))}{(\lambda-\mu)^3 \alpha^2}.
\end{equation}
\end{remark}

\section{Asymptotic results under the scaling 1}\label{sec:scaling-1}
We start with the standard large deviation result.

\begin{proposition}[LD as $x\to\infty$]\label{prop:LD1}
Assume that $\alpha\geq 1-\sqrt{\frac{\mu}{\lambda}}$. Then the
family $\left\{\frac{A_x}{x}:x>0\right\}$ satisfies the LDP
with speed $x$, and good rate function $I_1$ defined by
$I_1(z):=\Lambda^*(z;\lambda,\mu)$.
\end{proposition}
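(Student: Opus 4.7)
The natural strategy is to apply the Gärtner–Ellis Theorem (Theorem~1.1) to the family $Z_x := A_x/x$ with speed $v_x = x$. So the plan is to compute the limiting scaled cumulant generating function
$$\Lambda_1(s) := \lim_{x\to\infty}\frac{1}{x}\log\mathbb{E}\!\left[e^{xs\cdot A_x/x}\right]
=\lim_{x\to\infty}\frac{1}{x}\log\mathbb{E}\!\left[e^{sA_x}\right],$$
and then identify its Legendre transform with $I_1$.

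First I would plug in the closed-form MGF from Remark~\ref{rem:correction-mgf}. Under the hypothesis $\alpha\geq 1-\sqrt{\mu/\lambda}$, we are in the first case of that remark, so for every $s\leq\frac{(\sqrt\lambda-\sqrt\mu)^2}{2}$
$$\mathbb{E}\!\left[e^{sA_x}\right]=\frac{2\alpha\lambda}{2\lambda(\alpha-1)+\lambda+\mu-2s+\sqrt{(\lambda+\mu-2s)^2-4\lambda\mu}}\cdot e^{x\Lambda(s;\lambda,\mu)}.$$
Since $x$ appears only in the exponential factor, taking $\frac{1}{x}\log$ and letting $x\to\infty$ gives $\Lambda_1(s)=\Lambda(s;\lambda,\mu)$ for such $s$, and $\Lambda_1(s)=+\infty$ otherwise. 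One small point worth checking is that the prefactor is strictly positive on the domain: at $s=0$ it equals $2\alpha\lambda$, and the denominator stays positive on the whole interval (becoming zero only at the right endpoint when $\alpha=1-\sqrt{\mu/\lambda}$, which does not affect the $\frac{1}{x}\log$ limit).

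Next I would verify the hypotheses of Gärtner–Ellis. Finiteness of $\Lambda_1$ in a neighborhood of the origin is immediate since $0<\frac{(\sqrt\lambda-\sqrt\mu)^2}{2}$. The lower semi-continuity and essential smoothness of $\Lambda(\cdot;\lambda,\mu)$ under the assumption $\alpha\geq 1-\sqrt{\mu/\lambda}$ are exactly the content of Remark~\ref{rem:technical-detail-for-GET}; in particular, the derivative $\Lambda'(s;\lambda,\mu)$ blows up as $s\uparrow\frac{(\sqrt\lambda-\sqrt\mu)^2}{2}$, giving steepness. Gärtner–Ellis then delivers the full LDP with speed $x$ and good rate function $\Lambda_1^*=\Lambda^*(\cdot;\lambda,\mu)=I_1$, where the explicit formula is supplied by Lemma~\ref{lem:Legendre}.

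The only genuine obstacle is the boundary case $\alpha=1-\sqrt{\mu/\lambda}$: here the MGF's prefactor blows up at the endpoint $s=\frac{(\sqrt\lambda-\sqrt\mu)^2}{2}$, and one has to argue that this does not spoil the limit or the steepness/lower semi-continuity properties of $\Lambda(\cdot;\lambda,\mu)$. Since the prefactor is independent of $x$, its $\frac{1}{x}\log$ contribution vanishes, so $\Lambda_1$ still equals $\Lambda(\cdot;\lambda,\mu)$ up to and including the endpoint, and the steepness condition is inherited directly from the derivative of $\Lambda$. Everything else in the proof reduces to straightforward bookkeeping.
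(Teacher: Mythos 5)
Your proposal is correct and follows essentially the same route as the paper: compute $\lim_{x\to\infty}\frac{1}{x}\log\mathbb{E}[e^{sA_x}]=\Lambda(s;\lambda,\mu)$ from the explicit moment generating function (the $x$-independent prefactor being irrelevant on the exponential scale), then invoke Remark~\ref{rem:technical-detail-for-GET} and the G\"artner--Ellis Theorem, with Lemma~\ref{lem:Legendre} supplying the explicit rate function. One small correction: in the boundary case $\alpha=1-\sqrt{\mu/\lambda}$ the moment generating function is actually $+\infty$ \emph{at} the endpoint $s=\frac{(\sqrt{\lambda}-\sqrt{\mu})^2}{2}$ (the denominator vanishes there), so the limit $\Lambda_1$ equals $\Lambda(\cdot;\lambda,\mu)$ only strictly below the endpoint and is $+\infty$ there --- which is harmless, since this changes neither the Legendre transform nor the steepness, but it is why the paper's Remark~\ref{rem:technical-detail-for-GET} reserves lower semi-continuity for the strict inequality $\alpha>1-\sqrt{\mu/\lambda}$.
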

\begin{proof}
It is easy to check (by taking into account Remark
\ref{rem:correction-mgf}) that
$$\lim_{x\to\infty}\frac{1}{x}\log\mathbb{E}\left[e^{sA_x(\lambda,\mu)}\right]=\Lambda(s;\lambda,\mu)\ (\mbox{for all}\ s\in\mathbb{R}).$$
Then, by taking into account Remark
\ref{rem:technical-detail-for-GET}, the desired LDP holds by a
straightforward application of the G\"{a}rtner Ellis Theorem.
\end{proof}

Note that $\frac{A_x}{x}$ converges to $z_1:=\frac{\lambda+\mu}{\lambda-\mu}$
almost surely as $x\to\infty$ (this can be checked in a
standard way noting that the rate function $I_1$ uniquely
vanishes at $z_1$). Moreover
$z_1=\Lambda^\prime(0;\lambda,\mu)=\lim_{x\to\infty}\frac{\mathbb{E}[A_x]}{x}$.
Finally $z_1$ can be seen as the abscissa of the intersection
(in the $\tilde{x}\tilde{y}$ plane) of the lines $\tilde{y}=0$ and
$\tilde{y}=1+\frac{\frac{1}{\lambda}-\frac{1}{\mu}}{\frac{1}{\lambda}+\frac{1}{\mu}}\tilde{x}$.
A version of Proposition \ref{prop:LD1} concerning the case
$\alpha<1-\sqrt{\frac{\mu}{\lambda}}$ will be illustrated in Remark
\ref{rem:LD1-exposed-points-slight-extension} (case $r=1$).


Now we present the moderate deviation result. As it typically happens,
we have a class of LDPs governed by the same quadratic rate function
(i.e. $\tilde{I}_1$). Moreover this class of LDPs fills the gap
between a convergence to zero and a weak convergence to a centered
Normal distribution; see Remark \ref{rem:MD1} for some details and
comments.

\begin{proposition}[MD as $x\to\infty$]\label{prop:MD1}
For every family of positive numbers $\{\varepsilon_x:x>0\}$ such
that
\begin{equation}\label{eq:MD1-conditions}
\varepsilon_x\to 0\ \mbox{and}\ x\varepsilon_x\to\infty,
\end{equation}
the family $\left\{\frac{A_x-\mathbb{E}[A_x]}{\sqrt{x/\varepsilon_x}}:x>0\right\}$
satisfies the LDP with speed $1/\varepsilon_x$, and good rate
function $\tilde{I}_1$ defined by
$\tilde{I}_1(z):=\frac{z^2}{2\Lambda^{\prime\prime}(0;\lambda,\mu)}$,
where $\Lambda^{\prime\prime}(0;\lambda,\mu)=\frac{8\lambda\mu}{(\lambda-\mu)^3}$.
\end{proposition}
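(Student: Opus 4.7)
The plan is to apply the Gärtner--Ellis Theorem (Theorem 2.1) to the rescaled family $Z_x:=(A_x-\mathbb{E}[A_x])/\sqrt{x/\varepsilon_x}$ with speed $v_x=1/\varepsilon_x$. Thus I would compute, for each $s\in\mathbb{R}$, the limit
$$\Lambda_{\mathrm{MD}}(s):=\lim_{x\to\infty}\varepsilon_x\log\mathbb{E}\bigl[e^{(s/\varepsilon_x)Z_x}\bigr]
=\lim_{x\to\infty}\varepsilon_x\log\mathbb{E}\bigl[e^{\sigma_x(A_x-\mathbb{E}[A_x])}\bigr],$$
where $\sigma_x:=s/\sqrt{x\varepsilon_x}$. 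By \eqref{eq:MD1-conditions} we have $\sigma_x\to 0$, so eventually $\sigma_x$ lies in the interior of the domain of finiteness of $s\mapsto\mathbb{E}[e^{sA_x}]$ regardless of $\alpha$, and the explicit expression recalled in Remark \ref{rem:correction-mgf} is available.

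From that expression one reads off
$$\log\mathbb{E}\bigl[e^{\sigma A_x}\bigr]=x\,\Lambda(\sigma;\lambda,\mu)+R(\sigma;\lambda,\mu,\alpha),$$
where $R$ is the logarithm of the prefactor and is bounded in a neighborhood of $\sigma=0$. I would then Taylor-expand $\Lambda(\sigma;\lambda,\mu)$ around $0$, using $\Lambda(0;\lambda,\mu)=0$ and $\Lambda'(0;\lambda,\mu)=\tfrac{\lambda+\mu}{\lambda-\mu}$, to write
$$\Lambda(\sigma_x;\lambda,\mu)=\Lambda'(0;\lambda,\mu)\,\sigma_x+\tfrac{1}{2}\Lambda''(0;\lambda,\mu)\,\sigma_x^2+O(\sigma_x^3).$$
The centering contribution is $-\sigma_x\mathbb{E}[A_x]$; using the exact formula \eqref{eq:mean-variance} we have $\mathbb{E}[A_x]=x\Lambda'(0;\lambda,\mu)+\tfrac{2}{\alpha(\lambda-\mu)}$, so the linear-in-$\sigma_x$ term cancels exactly the leading piece of $x\Lambda(\sigma_x;\lambda,\mu)$, leaving an error of order $\sigma_x=s/\sqrt{x\varepsilon_x}$.

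Multiplying by $\varepsilon_x$ and passing to the limit yields
$$\varepsilon_x\cdot\tfrac{1}{2}x\Lambda''(0;\lambda,\mu)\sigma_x^2=\tfrac{1}{2}\Lambda''(0;\lambda,\mu)\,s^2,$$
while every remaining term is of order $\varepsilon_x\cdot x\sigma_x^3=O(1/\sqrt{x\varepsilon_x})$, $\varepsilon_x\sigma_x=O(\sqrt{\varepsilon_x/x})$ or $\varepsilon_x R(\sigma_x)=O(\varepsilon_x)$, all of which vanish by \eqref{eq:MD1-conditions}. Hence $\Lambda_{\mathrm{MD}}(s)=\tfrac{1}{2}\Lambda''(0;\lambda,\mu)s^2$, which is finite and differentiable on all of $\mathbb{R}$ (in particular essentially smooth and lower semi-continuous), so Gärtner--Ellis delivers the full LDP with good rate function $\tilde I_1(z)=\sup_{s\in\mathbb{R}}\{sz-\tfrac{1}{2}\Lambda''(0;\lambda,\mu)s^2\}=z^2/(2\Lambda''(0;\lambda,\mu))$. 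The explicit value $\Lambda''(0;\lambda,\mu)=8\lambda\mu/(\lambda-\mu)^3$ follows by differentiating \eqref{eq:def-Lambda} twice at $s=0$.

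The only delicate point is checking that the lower-order pieces really are negligible: the cancellation of the linear term relies on the fact that $\mathbb{E}[A_x]$ matches $x\Lambda'(0;\lambda,\mu)$ up to a constant (so the discrepancy produces only a vanishing $\sqrt{\varepsilon_x/x}$ contribution), and the cubic Taylor remainder has to be bounded uniformly in $x$ large enough, which is immediate since $\sigma_x\to 0$ and $\Lambda(\,\cdot\,;\lambda,\mu)$ is smooth at $0$. Everything else is a routine Gärtner--Ellis computation.
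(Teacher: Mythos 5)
Your proposal is correct and follows essentially the same route as the paper: apply the G\"artner--Ellis Theorem with speed $1/\varepsilon_x$, use that $\sigma_x=s/\sqrt{x\varepsilon_x}\to 0$ so the explicit moment generating function applies for any $\alpha$, Taylor-expand $\Lambda(\cdot;\lambda,\mu)$ to second order, and cancel the linear term via the exact mean formula \eqref{eq:mean-variance}. Your explicit bookkeeping of the bounded log-prefactor $R$ is slightly more careful than the paper's, but the argument is the same.
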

\begin{proof}
It suffices to show that
$$\lim_{x\to\infty}\frac{1}{1/\varepsilon_x}
\log\mathbb{E}\left[e^{\frac{s}{\varepsilon_x}\frac{A_x-\mathbb{E}[A_x]}{\sqrt{x/\varepsilon_x}}}\right]
=\frac{\Lambda^{\prime\prime}(0;\lambda,\mu)}{2}s^2\ (\mbox{for all}\ s\in\mathbb{R});$$
in fact the limit is a finite and differentiable function (with respect
to $s\in\mathbb{R}$) and, noting that
$$\tilde{I}_1(z)=\sup_{s\in\mathbb{R}}\left\{sz-\frac{\Lambda^{\prime\prime}(0;\lambda,\mu)}{2}s^2\right\}\ (\mbox{for all}\ z\in\mathbb{R}),$$
the desired LDP is a straightforward application of the
G\"{a}rtner Ellis Theorem.
	
We remark that
$$\frac{1}{1/\varepsilon_x}\log\mathbb{E}\left[e^{\frac{s}{\varepsilon_x}\frac{A_x-\mathbb{E}[A_x]}{\sqrt{x/\varepsilon_x}}}\right]
=\varepsilon_x\left(\log\mathbb{E}\left[e^{\frac{s}{\sqrt{x\varepsilon_x}}A_x}\right]-\frac{s}{\sqrt{x\varepsilon_x}}\mathbb{E}[A_x]\right)$$
and, since $\frac{s}{\sqrt{x\varepsilon_x}}$ is close to zero for $x$
large enough, it is easy to check (by the expressions of the moment
generating function in Remark \ref{rem:correction-mgf} and by
\eqref{eq:def-Lambda}) that
\begin{multline*}
\lim_{x\to\infty}\frac{1}{1/\varepsilon_x}\log\mathbb{E}\left[e^{\frac{s}{\varepsilon_x}\frac{A_x-\mathbb{E}[A_x]}{\sqrt{x/\varepsilon_x}}}\right]\\
=\lim_{x\to\infty}\varepsilon_x\left(x\Lambda\left(\frac{s}{\sqrt{x\varepsilon_x}};\lambda,\mu\right)-\frac{s}{\sqrt{x\varepsilon_x}}\mathbb{E}[A_x]\right)
=\lim_{x\to\infty}x\varepsilon_x\left(\Lambda\left(\frac{s}{\sqrt{x\varepsilon_x}};\lambda,\mu\right)
-\frac{s}{\sqrt{x\varepsilon_x}}\frac{\mathbb{E}[A_x]}{x}\right).
\end{multline*}
Now we take into account the Mac Laurin formula of order 2 for the
function $\Lambda(\cdot;\lambda,\mu)$, and we have
\begin{multline*}
\lim_{x\to\infty}\frac{1}{1/\varepsilon_x}\log\mathbb{E}\left[e^{\frac{s}{\varepsilon_x}\frac{A_x-\mathbb{E}[A_x]}{\sqrt{x/\varepsilon_x}}}\right]\\
=\lim_{x\to\infty}x\varepsilon_x\left(\left(\Lambda^\prime\left(0;\lambda,\mu\right)-\frac{\mathbb{E}[A_x]}{x}\right)\frac{s}{\sqrt{x\varepsilon_x}}
+\frac{\Lambda^{\prime\prime}\left(0;\lambda,\mu\right)}{2}\frac{s^2}{x\varepsilon_x}+o\left(\frac{s^2}{x\varepsilon_x}\right)\right)\\
\end{multline*}
where, by the mean value in \eqref{eq:mean-variance},
$$\Lambda^\prime\left(0;\lambda,\mu\right)-\frac{\mathbb{E}[A_x]}{x}=\frac{\lambda+\mu}{\lambda-\mu}-\frac{2+\alpha(\lambda+\mu)x}{\alpha(\lambda-\mu)x}
=-\frac{2}{\alpha(\lambda-\mu)x};$$ thus
\begin{multline*}
\lim_{x\to\infty}\frac{1}{1/\varepsilon_x}\log\mathbb{E}\left[e^{\frac{s}{\varepsilon_x}\frac{A_x-\mathbb{E}[A_x]}{\sqrt{x/\varepsilon_x}}}\right]
=\lim_{x\to\infty}-\frac{2s\sqrt{x\varepsilon_x}}{\alpha(\lambda-\mu)x}
+\frac{\Lambda^{\prime\prime}\left(0;\lambda,\mu\right)}{2}s^2
+x\varepsilon_xo\left(\frac{s^2}{x\varepsilon_x}\right)\\
=\frac{\Lambda^{\prime\prime}\left(0;\lambda,\mu\right)}{2}s^2
+\lim_{x\to\infty}-\frac{2s\sqrt{\varepsilon_x}}{\alpha(\lambda-\mu)\sqrt{x}}
+x\varepsilon_xo\left(\frac{s^2}{x\varepsilon_x}\right)=\frac{\Lambda^{\prime\prime}\left(0;\lambda,\mu\right)}{2}s^2.
\end{multline*}
\end{proof}

\begin{remark}[Typical features on MD in Proposition \ref{prop:MD1}]\label{rem:MD1}
The class of LDPs in Proposition \ref{prop:MD1} fills the gap between
the two following asymptotic regimes as $x\to\infty$:
\begin{itemize}
\item the convergence to zero of $\frac{A_x-\mathbb{E}[A_x]}{x}$ (case $\varepsilon_\mu=1/x$);
\item the weak convergence of $\frac{A_x-\mathbb{E}[A_x]}{\sqrt{x}}$ to the centered Normal
distribution with variance $\Lambda^{\prime\prime}(0;\lambda,\mu)$ (case $\varepsilon_x=1$).
\end{itemize}
In both cases one condition in \eqref{eq:MD1-conditions} holds, and the other one fails.
We also note that, by taking into account the variance expression
in \eqref{eq:mean-variance}, we have
$\Lambda^{\prime\prime}(0;\lambda,\mu)=\lim_{x\to\infty}\frac{\mathrm{Var}[A_x]}{x}$.
\end{remark}

We conclude this section by considering a generalization of Proposition \ref{prop:LD1}
with an independent random perturbation $Y(x)$ of the initial state $x$ under suitable
hypotheses collected in Condition \ref{cond:random-perturbation-initial-position} below;
this generalization will be given in Proposition \ref{prop:LD1-extended}, and it will be
followed by some remarks and comments. We start with the following slight generalization
of Proposition \ref{prop:LD1} where the initial state is modified in a deterministic way;
we recover the case in that proposition by setting $r=1$.

\begin{proposition}[Slight extension of Proposition \ref{prop:LD1}]\label{prop:LD1-slightly-extended}
Assume that $\alpha\geq 1-\sqrt{\frac{\mu}{\lambda}}$ and let $r>0$ be arbitrarily fixed.
Then the family $\left\{\frac{A_{rx}}{x}:x>0\right\}$ satisfies the LDP with speed $x$,
and good rate function $I_1(\cdot;r)$ defined by $I_1(z;r):=r\Lambda^*(z/r;\lambda,\mu)$.
\end{proposition}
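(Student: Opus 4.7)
The plan is to mimic the proof of Proposition \ref{prop:LD1} almost verbatim, applying the G\"artner Ellis Theorem to the rescaled family $\{A_{rx}/x:x>0\}$. The only changes are a bookkeeping factor of $r$ in the limiting cumulant generating function, and a corresponding rescaling in the Legendre transform.

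First I would compute the limit
$$\Lambda_r(s):=\lim_{x\to\infty}\frac{1}{x}\log\mathbb{E}\left[e^{s\cdot x\cdot(A_{rx}/x)}\right]
=\lim_{x\to\infty}\frac{1}{x}\log\mathbb{E}\left[e^{sA_{rx}}\right].$$
By Remark \ref{rem:correction-mgf}, for every admissible $s$ the moment generating function $\mathbb{E}[e^{sA_{rx}}]$ is the product of $e^{rx\Lambda(s;\lambda,\mu)}$ and a factor that does not depend on $x$; dividing by $x$, taking logarithms, and letting $x\to\infty$ yields $\Lambda_r(s)=r\Lambda(s;\lambda,\mu)$ for $s$ in the effective domain, and $\Lambda_r(s)=+\infty$ otherwise. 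This limit is therefore finite in a neighbourhood of the origin.

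Next I would invoke Remark \ref{rem:technical-detail-for-GET}: the hypothesis $\alpha\geq 1-\sqrt{\mu/\lambda}$ guarantees that $\Lambda(\cdot;\lambda,\mu)$ is lower semi-continuous and essentially smooth, and multiplication by the positive constant $r$ preserves both properties (steepness, differentiability in the interior of the domain, and lower semi-continuity are all invariant under positive scalar multiplication). The G\"artner Ellis Theorem (Theorem \ref{th:GE}) then yields the full LDP with speed $x$ and good rate function $\Lambda_r^*$.

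Finally I would identify the rate function. A direct change of the supremum variable gives
$$\Lambda_r^*(z)=\sup_{s\in\mathbb{R}}\{sz-r\Lambda(s;\lambda,\mu)\}
=r\sup_{s\in\mathbb{R}}\left\{s\tfrac{z}{r}-\Lambda(s;\lambda,\mu)\right\}=r\Lambda^*(z/r;\lambda,\mu)=I_1(z;r),$$
which is the advertised expression. There is no real obstacle here: the only conceptual point to double-check is that the $x$-independent prefactor in the moment generating function (and any possible restriction to $s<\hat{s}(\lambda,\mu,\alpha)$ in the subcase $\alpha=1-\sqrt{\mu/\lambda}$) does not affect the limit $\Lambda_r(s)$, which is immediate since $(1/x)\log C(s)\to 0$ and the domain of $\Lambda_r$ coincides with that of $\Lambda(\cdot;\lambda,\mu)$ up to scaling.
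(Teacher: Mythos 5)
Your proposal is correct and follows essentially the same route as the paper: compute $\lim_{x\to\infty}\frac{1}{x}\log\mathbb{E}[e^{sA_{rx}}]=r\Lambda(s;\lambda,\mu)$ from the explicit moment generating function, apply the G\"artner Ellis Theorem using the smoothness properties from Remark \ref{rem:technical-detail-for-GET}, and identify the rate function via the change of variable in the supremum. Your added remarks about the $x$-independent prefactor and the invariance of essential smoothness under multiplication by $r>0$ only make explicit what the paper leaves implicit.
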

\begin{proof}
It is easy to check that
$$\lim_{x\to\infty}\frac{1}{x}\log\mathbb{E}\left[e^{sA_{rx}(\lambda,\mu)}\right]=r\Lambda(s;\lambda,\mu)\ (\mbox{for all}\ s\in\mathbb{R})$$
(it is a slight modification of the analogue limit in the proof of Proposition
\ref{prop:LD1} where $r=1$).Then, by taking into account Remark
\ref{rem:technical-detail-for-GET}, the desired LDP holds by a straightforward
application of the G\"{a}rtner Ellis Theorem. In fact the governing rate
function $I_1(\cdot;r)$ is defined by
$$I_1(z;r):=\sup_{s\in\mathbb{R}}\{sz-r\Lambda(s;\lambda,\mu)\}=r\sup_{s\in\mathbb{R}}\{sz/r-\Lambda(s;\lambda,\mu)\},$$
and this coincides with the rate function in the statement of the proposition.
\end{proof}

Arguing as we did just after Proposition \ref{prop:LD1}, we can say that
$\frac{A_{rx}}{x}$ converges to $rz_1=r\frac{\lambda+\mu}{\lambda-\mu}$
almost surely as $x\to\infty$ (and the rate function $I_1(\cdot;r)$ uniquely
vanishes at $rz_1$).

\begin{remark}[Versions of Propositions \ref{prop:LD1} and \ref{prop:LD1-slightly-extended} with exposed points]\label{rem:LD1-exposed-points-slight-extension}
Here we discuss what happens when we consider the inequality
$\alpha<1-\sqrt{\frac{\mu}{\lambda}}$ in Proposition \ref{prop:LD1-slightly-extended}
(and therefore in Proposition \ref{prop:LD1} for the case $r=1$).
We have to consider some items of in the second part of Lemma
\ref{lem:Legendre} and, by the G\"artner Ellis Theorem, we have
$$\limsup_{x\to\infty}\frac{1}{x}\log P\left(\frac{A_{rx}}{x}\in F\right)
\leq-\inf_{z\in F}r\Lambda^*(z/r;\lambda,\mu,\alpha)\ \mbox{for all closed sets}\ F$$
and
$$\liminf_{x\to\infty}\frac{1}{x}\log P\left(\frac{A_{rx}}{x}\in G\right)
\geq-\inf_{z\in G\cap\mathcal{E} }r\Lambda^*(z/r;\lambda,\mu,\alpha)\ \mbox{for all open sets}\ G$$
where $\mathcal{E}=(r\tilde{z}(\lambda,\mu,\alpha),\infty)$ is the set of exposed points of
$r\Lambda^*(\cdot/r;\lambda,\mu,\alpha)$. Note that
$r\tilde{z}(\lambda,\mu,\alpha)>rz_1$, and therefore
both $r\Lambda^*(\cdot/r;\lambda,\mu,\alpha)$ and $I_1(\cdot;r)$ uniquely vanish at
$rz_1$.
\end{remark}

Now we introduce the condition on the random perturbation of the initial state.

\begin{condition}\label{cond:random-perturbation-initial-position}
Let $\{Y(x):x\geq 0\}$ be a family of nonnegative random variables
and assume that there exists the function
$\Psi_Y:\mathbb{R}\to(-\infty,\infty]$ such that
$$\Psi_Y(s):=\lim_{x\to\infty}\frac{1}{x}\log\mathbb{E}\left[e^{sY(x)}\right]$$
for all $s\in\mathbb{R}$. The function $\Psi_Y$ is nondecreasing by
construction; so we consider the set
$$\mathcal{D}_Y:=\{s\in\mathbb{R}:\Psi_Y(s)<\infty\},$$
and we assume that either $\mathcal{D}_Y=\mathbb{R}$ or, for some $\bar{s}>0$,
$\mathcal{D}_Y=(-\infty,\bar{s})$ or $\mathcal{D}_Y=(-\infty,\bar{s}]$
(note that $(-\infty,0]\subset\mathcal{D}_Y$).
\end{condition}

We remark that Condition \ref{cond:random-perturbation-initial-position} holds
when $\{Y(x):x\geq 0\}$ belongs to a wide class of nondecreasing (with respect
to $x$) L\'evy processes, also called subordinators; in this case we have
$\Psi_Y(s):=\log\mathbb{E}[e^{sY(1)}]$. For instance we recall the following
examples of infinitely divisible distributions concerning the random variable
$Y(1)$.\\

\begin{tabular}{cccccc}
Distribution&parameters&$\mathcal{D}_Y$&$\Psi_Y(s)$ for $s\in\mathcal{D}_Y$&$\Psi_Y^\prime(0)$&$\Psi_Y^{\prime\prime}(0)$\\
Poisson&$\lambda>0$&$\mathbb{R}$&$\lambda(e^s-1)$&$\lambda$&$\lambda$\\
Gamma&$\lambda,\theta>0$&$(-\infty,\bar{s}),\ \bar{s}=\theta$&$\gamma\log\frac{\theta}{\theta-s}$&$\frac{\lambda}{\theta}$&$\frac{\lambda}{\theta^2}$\\
Inverse Gaussian&$\xi>0$&$(-\infty,\bar{s}],\ \bar{s}=\frac{\xi^2}{2}$&$\xi-\sqrt{\xi^2-2s}$&$\xi^{-1}$&$\xi^{-3}$
\end{tabular}
\ \\
\ \\
So now we are ready to state the main generalization of Proposition \ref{prop:LD1}.

\begin{proposition}[Extension of Proposition \ref{prop:LD1}]\label{prop:LD1-extended}
Assume that $\alpha\geq 1-\sqrt{\frac{\mu}{\lambda}}$ and that a process $\{Y(x):x\geq 0\}$,
independent of $\{A_x:x\geq 0\}$, satisfies Condition
\ref{cond:random-perturbation-initial-position}. Moreover let $\Lambda_Y$ be the
function defined by
$$\Lambda_Y(s):=\left\{\begin{array}{ll}
\Psi_Y(\Lambda(s;\lambda,\mu))&\ \mbox{for}\ \Lambda(s;\lambda,\mu)\in\mathcal{D}_Y\ \mbox{and}\ s\leq\frac{(\sqrt{\lambda}-\sqrt{\mu})^2}{2}\\
\infty&\ \mbox{otherwise},
\end{array}\right.$$
and assume that it is essentially smooth. Then the family
$\left\{\frac{A_{Y(x)}}{x}:x>0\right\}$ satisfies the LDP with speed $x$, and good rate
function $\Lambda_Y^*$ defined by $\Lambda_Y^*(z):=\sup_{s\in\mathbb{R}}\{sz-\Lambda_Y(s)\}$.
\end{proposition}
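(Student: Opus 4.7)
The plan is to apply the Gärtner--Ellis theorem to $\{A_{Y(x)}/x:x>0\}$, mimicking the proofs of Propositions \ref{prop:LD1} and \ref{prop:LD1-slightly-extended}. The one new feature is that the initial position is random, so I would first condition on $Y(x)$ and use the independence of $Y$ and $A$, together with the explicit moment generating function recalled in Remark \ref{rem:correction-mgf}. Under the hypothesis $\alpha\geq 1-\sqrt{\mu/\lambda}$ the conditional moment generating function for $s\leq\frac{(\sqrt{\lambda}-\sqrt{\mu})^2}{2}$ factors as a prefactor $C(s;\lambda,\mu,\alpha)$ (independent of the starting point) times $e^{Y(x)\Lambda(s;\lambda,\mu)}$; consequently
\begin{equation*}
\mathbb{E}\!\left[e^{sA_{Y(x)}}\right]=C(s;\lambda,\mu,\alpha)\,\mathbb{E}\!\left[e^{\Lambda(s;\lambda,\mu)\,Y(x)}\right].
\end{equation*}

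Next I would take $\frac{1}{x}\log$ of both sides and pass to the limit $x\to\infty$. The prefactor $C(s;\lambda,\mu,\alpha)$ is independent of $x$, hence its contribution vanishes at the exponential scale. By Condition \ref{cond:random-perturbation-initial-position}, the second factor contributes $\Psi_Y(\Lambda(s;\lambda,\mu))$, so for every $s\in\mathbb{R}$
\begin{equation*}
\lim_{x\to\infty}\frac{1}{x}\log\mathbb{E}\!\left[e^{sA_{Y(x)}}\right]=\Lambda_Y(s),
\end{equation*}
with $\Lambda_Y$ as in the statement. Note that $0$ lies in the interior of $\mathcal{D}_{\Lambda_Y}$: indeed $\Lambda(0;\lambda,\mu)=0$ belongs to the interior of $\mathcal{D}_Y$ (since either $\mathcal{D}_Y=\mathbb{R}$ or $\mathcal{D}_Y$ extends strictly beyond $0$), and $0<\frac{(\sqrt{\lambda}-\sqrt{\mu})^2}{2}$. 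Having assumed $\Lambda_Y$ essentially smooth, the Gärtner--Ellis theorem (Theorem \ref{th:GE}) delivers the LDP with good rate function $\Lambda_Y^*$.

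The only step requiring a little extra care is the correct identification of the limit outside the range of $s$ for which the explicit formula is finite. For $s>\frac{(\sqrt{\lambda}-\sqrt{\mu})^2}{2}$ the conditional moment generating function $\mathbb{E}[e^{sA_y}]$ is infinite for every $y>0$ and, since $Y(x)>0$ with positive probability (at least for $x$ large), the unconditional mgf is also infinite, matching $\Lambda_Y(s)=+\infty$. For $s\leq\frac{(\sqrt{\lambda}-\sqrt{\mu})^2}{2}$ with $\Lambda(s;\lambda,\mu)\notin\mathcal{D}_Y$, Condition \ref{cond:random-perturbation-initial-position} directly yields $\frac{1}{x}\log\mathbb{E}[e^{\Lambda(s;\lambda,\mu)Y(x)}]\to+\infty$, again in agreement with the definition of $\Lambda_Y$. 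Beyond this bookkeeping the proof is a straightforward transcription of the scheme already used for Proposition \ref{prop:LD1-slightly-extended}, with the essential-smoothness hypothesis on $\Lambda_Y$ playing the role previously played by Remark \ref{rem:technical-detail-for-GET}; indeed verifying essential smoothness in general is the most delicate point, which is why the proposition takes it as an assumption rather than deriving it from Condition \ref{cond:random-perturbation-initial-position}.
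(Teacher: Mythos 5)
Your proposal is correct and follows essentially the same route as the paper: condition on $Y(x)$ using independence, factor the moment generating function via Remark \ref{rem:correction-mgf} so that the $x$-dependence sits in $\mathbb{E}[e^{Y(x)\Lambda(s;\lambda,\mu)}]$, identify the limit $\Lambda_Y(s)=\Psi_Y(\Lambda(s;\lambda,\mu))$ through Condition \ref{cond:random-perturbation-initial-position}, and conclude by the G\"artner--Ellis Theorem under the assumed essential smoothness. Your extra bookkeeping for $s$ outside the effective domain is a harmless refinement of what the paper leaves implicit.
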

\begin{proof}
We compute the moment generating function of $A_{Y(x)}$ by considering the well-known
equality $\mathbb{E}\left[e^{sA_{Y(x)}(\lambda,\mu)}\right]
=\mathbb{E}\left[\left.\mathbb{E}\left[e^{sA_r(\lambda,\mu)}\right]\right|_{r=Y(x)}\right]$.
Moreover, by the expression of the moment generating function in Remark
\ref{rem:correction-mgf}, we get
$$\mathbb{E}\left[e^{sA_{Y(x)}(\lambda,\mu)}\right]=\left\{\begin{array}{ll}
\frac{2\alpha\lambda\mathbb{E}\left[e^{Y(x)\Lambda(s;\lambda,\mu)}\right]}{2\lambda(\alpha-1)+\lambda+\mu-2s+\sqrt{(\lambda+\mu-2s)^2-4\lambda\mu}}
&\ \mbox{for}\ s\leq\frac{(\sqrt{\lambda}-\sqrt{\mu})^2}{2}\\
\infty&\ \mbox{otherwise}.
\end{array}\right.$$
So, by Condition \ref{cond:random-perturbation-initial-position}, we get
$$\lim_{x\to\infty}\frac{1}{x}\log\mathbb{E}\left[e^{sA_{Y(x)}(\lambda,\mu)}\right]=\Lambda_Y(s)\ (\mbox{for all}\ s\in\mathbb{R}),$$
where $\Lambda_Y$ is the function in the statement of the proposition. In conclusion,
since $\Lambda_Y$ is an essentially smooth function, the desired LDP holds by a
straightforward application of the G\"{a}rtner Ellis Theorem.
\end{proof}

Now we present some remarks and comments on Proposition \ref{prop:LD1-extended}.
In what follows we assume that the function $\Psi_Y$ is differentiable in
the interior of $\mathcal{D}_Y$.

\begin{remark}[Extension of some parts in the proof of Lemma \ref{lem:Legendre}]\label{rem:extension-of-something-in-Lemma}
We consider $\Psi_Y^\prime(-\infty):=\lim_{s\to -\infty}\Psi_Y^\prime(s)$ (this limit is
well-defined because $\Psi_Y^\prime$ is monotonic by the convexity of $\Psi_Y$). Then,
for $z>\Psi_Y^\prime(-\infty)$, one can check that the equation $z=\Lambda_Y^\prime(s)$
has solution $s=\tilde{s}_z$, and we have
$$\Lambda_Y^*(z)=z\tilde{s}_z-\Lambda_Y(\tilde{s}_z).$$
On the other hand, for $z\leq\Psi_Y^\prime(-\infty)$ one can check that
$$\Lambda_Y^*(z)=\lim_{s\to -\infty}sz-\Lambda_Y(s),$$
which is finite for $z=\Psi_Y^\prime(-\infty)$ and infinite for $z<\Psi_Y^\prime(-\infty)$.
\end{remark}

\begin{remark}[On the essential smoothness of $\Lambda_Y$]\label{rem:on-essential-smoothness}
In general, if $s$ belongs to the interior of the set where $\Lambda_Y(s)<\infty$, we have
$$\Lambda_Y^\prime(s)=\Psi_Y^\prime(\Lambda(s;\lambda,\mu))\Lambda^\prime(s;\lambda,\mu).$$
Then, if we refer to Condition \ref{cond:random-perturbation-initial-position}, we have two cases.
\begin{enumerate}
\item If $\mathcal{D}_Y=\mathbb{R}$, then we have to check that $\Lambda_Y^\prime(s)\uparrow\infty$
as $s\uparrow\frac{(\sqrt{\lambda}-\sqrt{\mu})^2}{2}$. This statement is true because
$\Lambda^\prime(s;\lambda,\mu)\uparrow\infty$ and $\Psi_Y^\prime(\Lambda(s;\lambda,\mu))$ tends to
a positive limit.
\item If we have $\mathcal{D}_Y=(-\infty,\bar{s})$ or $\mathcal{D}_Y=(-\infty,\bar{s}]$ for some
$\bar{s}\in(0,\infty)$, then we take
$s_0:=\Lambda^{-1}(\bar{s};\lambda,\mu)\wedge\frac{(\sqrt{\lambda}-\sqrt{\mu})^2}{2}$ and we have
to check that $\Lambda_Y^\prime(s)\uparrow\infty$ as $s\uparrow s_0$. If
$s_0=\frac{(\sqrt{\lambda}-\sqrt{\mu})^2}{2}$, then we can conclude following the lines of the
previous case ($\mathcal{D}_Y=\mathbb{R}$). If $s_0=\Lambda^{-1}(\bar{s};\lambda,\mu)$,
we also require the condition $\Psi_Y^\prime(s)\uparrow\infty$ as $s\uparrow\bar{s}$, and then we
have $\Lambda_Y^\prime(s)\uparrow\infty$ because $\Psi_Y^\prime(\Lambda(s;\lambda,\mu))\uparrow\infty$
and $\Lambda^\prime(s;\lambda,\mu)$ tends to a positive limit.
\end{enumerate}
\end{remark}

We continue with some further comments and, from now on, we assume that
$\Psi_Y^\prime(0)>0$; note that this condition holds for the examples
tabulated above. Moreover we assume to have the hypotheses of Propositions
\ref{prop:LD1-slightly-extended} and \ref{prop:LD1-extended} that
guarantee the validity of the LDPs stated in those propositions. It is known
that $\Lambda_Y^*(z)=0$ if and only if
$z=\hat{z}:=\Lambda_Y^\prime(0)=\Psi_Y^\prime(0)\Lambda^\prime(0;\lambda,\mu)$,
and $I_1(z;r)=0$ if and only if $z=z_r^*:=r\Lambda^\prime(0;\lambda,\mu)$. So,
if we take $r=\Psi_Y^\prime(0)$, we have $\hat{z}=z_r^*$, both rate functions
$\Lambda_Y^*$ and $I_1(\cdot;r)$ uniquely vanish at $\hat{z}$, and therefore
both $\frac{A_{Y(x)}}{x}$ and $\frac{A_{rx}}{x}$ converge to same limit
$\hat{z}$ (as $x\to\infty$). Thus, in this case, it is interesting to find
inequalities between rate functions (when $z$ belongs to a neighborhood of
$\hat{z}$, except $z=\hat{z}$) to say that we have a faster
convergence in the case governed by the (locally) larger rate function.

We start noting that $\Psi_Y(s)\geq\Psi_Y^\prime(0) s$
by the convexity of $\Psi_Y$ and by $\Psi_Y(0)=0$; thus we obtain
$$\Lambda_Y(s)\geq\Psi_Y^\prime(0)\Lambda(s;\lambda,\mu)\ \mbox{for all}\ s\in\mathbb{R}$$
(in fact, if $s>\frac{(\sqrt{\lambda}-\sqrt{\mu})^2}{2}$, we
have $\Lambda_Y(s)=\Psi_Y^\prime(0)\Lambda(s;\lambda,\mu)=\infty$).
So we easily obtain the following inequality between rate functions:
$$\Lambda_Y^*(z)=\sup_{s\in\mathbb{R}}\{sz-\Lambda_Y(s)\}\leq
\sup_{s\in\mathbb{R}}\{sz-\Psi_Y^\prime(0)\Lambda(s;\lambda,\mu)\}=I_1(z;\Psi_Y^\prime(0)).$$
In conclusion the rate function which governs the LDP of
$\left\{\frac{A_{\Psi_Y^\prime(0)x}}{x}:x>0\right\}$ cannot be smaller
than the one for the LDP of $\left\{\frac{A_{Y(x)}}{x}:x>0\right\}$;
this is not surprising because we expect to have a faster convergence
(to $\hat{z}$ as $x\to\infty$) when the perturbation of the initial
position is deterministic.

We also remark that, under suitable conditions (for instance if $\Psi_Y$
is strictly convex, as happens for the tabulated examples above), we
have the strict inequality $\Lambda_Y^*(z)<I_1(z;\Psi_Y^\prime(0))$ except
for the cases in which both $\Lambda_Y^*(z)$ and $I_1(z;\Psi_Y^\prime(0))$
are equal to zero (i.e. if $z=\hat{z}$) or to infinity (i.e. if
$z<\Psi_Y^\prime(-\infty)$).

As a final comment we also briefly discuss the comparison between
the second derivatives of the rate functions at $z=\hat{z}$; indeed
a larger second derivative corresponds to a locally larger rate
function in a neighborhood of $\hat{z}$, except $z=\hat{z}$. We have
$$(\Lambda_Y^*)^{\prime\prime}(\Lambda_Y^\prime(0))=\frac{1}{\Lambda_Y^{\prime\prime}(0)}
=\frac{1}{\Psi_Y^{\prime\prime}(0)(\Lambda^\prime(0;\lambda,\mu))^2+\Psi_Y^\prime(0)\Lambda^{\prime\prime}(0;\lambda,\mu)}$$
and
$$I_1^{\prime\prime}(r\Lambda^\prime(0;\lambda,\mu);r)=\frac{1}{r\Lambda^{\prime\prime}(0;\lambda,\mu)};$$
thus, if we set $r=\Psi_Y^\prime(0)$ in the last equalities, we get
$$(\Lambda_Y^*)^{\prime\prime}(\hat{z})\leq I_1^{\prime\prime}(\hat{z};\Psi_Y^\prime(0))$$
by the convexity of the function $\Psi_Y$ which yields $\Psi_Y^{\prime\prime}(0)\geq 0$.
Actually in several common cases we have the strict inequality
$(\Lambda_Y^*)^{\prime\prime}(\hat{z})<I_1^{\prime\prime}(\hat{z};\Psi_Y^\prime(0))$
because $\Lambda^\prime(0;\lambda,\mu)>0$ and, as happens for the tabulated examples above,
$\Psi_Y^{\prime\prime}(0)>0$.

\section{Asymptotic results under the scaling 2}\label{sec:scaling-2}
Throughout this section we set $\lambda=\beta\mu$ for some
$\beta>1$ and $x>0$. We start with the standard large deviation result.

\begin{proposition}[LD as $\mu\to\infty$]\label{prop:LD2}
Assume that $\alpha\geq 1-\sqrt{\frac{1}{\beta}}$. Then the	family
$\{A_x(\beta\mu,\mu):\mu>0\}$ satisfies the LDP with speed $\mu$,
and good rate function $I_2$ defined by
$I_2(z):=x\Lambda^*(z/x;\beta,1)$.
\end{proposition}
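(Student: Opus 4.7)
The plan is to apply the Gärtner--Ellis Theorem (Theorem 2.1) to the family $\{A_x(\beta\mu,\mu):\mu>0\}$ with speed $v_\mu=\mu$. So the main task is to identify the limiting scaled cumulant generating function
$$\Lambda_2(s):=\lim_{\mu\to\infty}\frac{1}{\mu}\log\mathbb{E}\!\left[e^{\mu s\,A_x(\beta\mu,\mu)}\right],$$
and to verify the hypotheses (finiteness near $s=0$ and essential smoothness) that guarantee the full LDP.

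First, I would exploit the explicit form of the moment generating function given in Remark \ref{rem:correction-mgf} (applicable since $\alpha\geq 1-\sqrt{1/\beta}=1-\sqrt{\mu/(\beta\mu)}$, so we are in the first case of that remark). Substituting $\lambda=\beta\mu$ and the argument $\mu s$, the key observation is the homogeneity
$$\Lambda(\mu s;\beta\mu,\mu)=\frac{1}{2}\left(\beta\mu-\mu-\sqrt{(\beta\mu+\mu-2\mu s)^2-4\beta\mu^2}\right)=\mu\,\Lambda(s;\beta,1),$$
valid for $s\leq\frac{(\sqrt{\beta}-1)^2}{2}$. Hence
$$\mathbb{E}\!\left[e^{\mu s\,A_x(\beta\mu,\mu)}\right]=\frac{2\alpha\beta}{2\beta(\alpha-1)+\beta+1-2s+\sqrt{(\beta+1-2s)^2-4\beta}}\cdot e^{\mu x\Lambda(s;\beta,1)},$$
where the prefactor is finite and independent of $\mu$ on the relevant $s$-range. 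Dividing by $\mu$ and letting $\mu\to\infty$ kills the logarithm of the prefactor, yielding
$$\Lambda_2(s)=\left\{\begin{array}{ll} x\,\Lambda(s;\beta,1) & \text{for}\ s\leq\frac{(\sqrt{\beta}-1)^2}{2}\\ \infty & \text{otherwise}.\end{array}\right.$$

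Next I would check essential smoothness of $\Lambda_2$. Multiplication by the positive constant $x$ preserves essential smoothness, so it suffices to apply Remark \ref{rem:technical-detail-for-GET} with parameters $(\lambda,\mu)=(\beta,1)$: the condition $\alpha\geq 1-\sqrt{\mu/\lambda}$ there becomes precisely $\alpha\geq 1-\sqrt{1/\beta}$, which is exactly the standing hypothesis. So $\Lambda_2$ is finite in a neighborhood of the origin (any $s<\frac{(\sqrt{\beta}-1)^2}{2}$), lower semi-continuous, and essentially smooth.

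Finally, by the Gärtner--Ellis Theorem, the family satisfies the LDP with speed $\mu$ and good rate function $\Lambda_2^*$. It remains to identify $\Lambda_2^*$ with the claimed $I_2$: by the change of variable $z\mapsto z/x$,
$$\Lambda_2^*(z)=\sup_{s\in\mathbb{R}}\{sz-x\Lambda(s;\beta,1)\}=x\sup_{s\in\mathbb{R}}\{s(z/x)-\Lambda(s;\beta,1)\}=x\Lambda^*(z/x;\beta,1)=I_2(z).$$
The main (and essentially only) technical step is the factoring identity $\Lambda(\mu s;\beta\mu,\mu)=\mu\Lambda(s;\beta,1)$, after which everything reduces to routine verification of the Gärtner--Ellis hypotheses via Remark \ref{rem:technical-detail-for-GET}; no delicate estimate is needed because the prefactor in the moment generating function is of order $1$ rather than exponential in $\mu$.
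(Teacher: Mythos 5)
Your proof is correct and follows essentially the same route as the paper: identify the limit $\lim_{\mu\to\infty}\frac{1}{\mu}\log\mathbb{E}[e^{\mu s A_x(\beta\mu,\mu)}]=x\Lambda(s;\beta,1)$ from the explicit moment generating function of Remark \ref{rem:correction-mgf}, invoke Remark \ref{rem:technical-detail-for-GET} for essential smoothness, apply the G\"artner--Ellis Theorem, and rescale the Legendre transform to get $I_2(z)=x\Lambda^*(z/x;\beta,1)$. The only addition is that you make explicit the homogeneity identity $\Lambda(\mu s;\beta\mu,\mu)=\mu\Lambda(s;\beta,1)$ and the $\mu$-independence of the prefactor, which the paper leaves as ``easy to check.''
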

\begin{proof}
It is easy to check (by taking into account Remark
\ref{rem:correction-mgf}) that
$$\lim_{\mu\to\infty}\frac{1}{\mu}\log\mathbb{E}\left[e^{\mu sA_x(\beta\mu,\mu)}\right]=x\Lambda(s;\beta,1)\ (\mbox{for all}\ s\in\mathbb{R}).$$
Then, by taking into account Remark
\ref{rem:technical-detail-for-GET}, the desired LDP holds by a
straightforward application of the G\"{a}rtner Ellis Theorem. In
fact the governing rate function $I_2$ is defined by
$$I_2(z):=\sup_{s\in\mathbb{R}}\{sz-x\Lambda(s;\beta,1)\}=x\sup_{s\in\mathbb{R}}\{sz/x-\Lambda(s;\beta,1)\},$$
and this coincides with the rate function in the statement of the proposition.
\end{proof}

Note that $A_x(\beta\mu,\mu)$ converges to $z_2:=x\frac{\beta+1}{\beta-1}$
almost surely as $\mu\to\infty$ (in fact the rate function $I_2$ uniquely
vanishes at $z_2$). Moreover
$z_2=x\Lambda^\prime(0;\beta,1)=\lim_{\mu\to\infty}\mathbb{E}[A_x(\beta\mu,\mu)]$.
Finally $z_2$ can be seen as the abscissa of the intersection
(in the $\tilde{x}\tilde{y}$ plane) of the lines $\tilde{y}=0$ and
$\tilde{y}=x+\frac{\frac{1}{\beta\mu}-\frac{1}{\mu}}{\frac{1}{\beta\mu}+\frac{1}{\mu}}\tilde{x}$.

\begin{remark}[A version of Proposition \ref{prop:LD2} with exposed points]\label{rem:LD2-exposed-points}
Here we discuss what happens when we consider the inequality
$\alpha<1-\sqrt{\frac{1}{\beta}}$ in Proposition \ref{prop:LD2}.
In this case we still have to consider some items of in the second part of
Lemma \ref{lem:Legendre} (as in Remark \ref{rem:LD1-exposed-points-slight-extension})
and, by the G\"artner Ellis Theorem, we have
$$\limsup_{\mu\to\infty}\frac{1}{\mu}\log P\left(A_x(\beta\mu,\mu)\in F\right)
\leq-\inf_{z\in F}x\Lambda^*(z/x;\beta,1,\alpha)\ \mbox{for all closed sets}\ F$$
and
$$\liminf_{\mu\to\infty}\frac{1}{\mu}\log P\left(A_x(\beta\mu,\mu)\in G\right)
\geq-\inf_{z\in G\cap\mathcal{E}}x\Lambda^*(z/x;\beta,1,\alpha)\ \mbox{for all open sets}\ G$$
where $\mathcal{E}=(x\tilde{z}(\beta,1,\alpha),\infty)$ is the set of exposed points of
$x\Lambda^*(\cdot/x;\beta,1,\alpha)$. Note that
$x\tilde{z}(\beta,1,\alpha)>z_2$, and therefore
both $x\Lambda^*(\cdot/x;\beta,1,\alpha)$ and $I_2$ uniquely vanish at $z_2$.
\end{remark}


Now we present the moderate deviation result. As it typically happens,
we have a class of LDPs governed by the same quadratic rate function
(i.e. $\tilde{I}_2$). Moreover this class of LDPs fills the gap
between a convergence to zero and a weak convergence to a centered
Normal distribution; see Remark \ref{rem:MD2} for some details and
comments.

\begin{proposition}[MD as $\mu\to\infty$]\label{prop:MD2}
For every family of positive numbers $\{\varepsilon_\mu:\mu>0\}$
such that
\begin{equation}\label{eq:MD2-conditions}
\varepsilon_\mu\to 0\ \mbox{and}\ \mu\varepsilon_\mu\to\infty,
\end{equation}
the family $\left\{\sqrt{\mu\varepsilon_\mu}(A_x(\beta\mu,\mu)-\mathbb{E}[A_x(\beta\mu,\mu)]):\mu>0\right\}$
satisfies the LDP with speed $1/\varepsilon_\mu$, and good rate
function $\tilde{I}_2$ defined by
$\tilde{I}_2(z):=\frac{z^2}{2x\Lambda^{\prime\prime}(0;\beta,1)}$,
where $\Lambda^{\prime\prime}(0;\beta,1)=\frac{8\beta}{(\beta-1)^3}$.
\end{proposition}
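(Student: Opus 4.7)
The strategy is to apply the G\"artner-Ellis Theorem (Theorem \ref{th:GE}) in exact parallel with the proof of Proposition \ref{prop:MD1}. Setting $Z_\mu:=\sqrt{\mu\varepsilon_\mu}\,(A_x(\beta\mu,\mu)-\mathbb{E}[A_x(\beta\mu,\mu)])$, it suffices to show that
\begin{equation*}
\lim_{\mu\to\infty}\varepsilon_\mu\log\mathbb{E}\left[e^{(s/\varepsilon_\mu) Z_\mu}\right]=\frac{x\Lambda''(0;\beta,1)}{2}\,s^2\quad\mbox{for all}\ s\in\mathbb{R};
\end{equation*}
the limit is finite and smooth on $\mathbb{R}$, and its Legendre transform coincides with $\tilde{I}_2$, so the LDP is immediate.

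To obtain this limit I would first separate the mean, writing
\begin{equation*}
\varepsilon_\mu\log\mathbb{E}\left[e^{(s/\varepsilon_\mu) Z_\mu}\right]
=\varepsilon_\mu\log\mathbb{E}\left[e^{s\sqrt{\mu/\varepsilon_\mu}\,A_x(\beta\mu,\mu)}\right]-s\sqrt{\mu\varepsilon_\mu}\,\mathbb{E}[A_x(\beta\mu,\mu)].
\end{equation*}
By \eqref{eq:MD2-conditions} the auxiliary variable $u_\mu:=s/\sqrt{\mu\varepsilon_\mu}$ tends to $0$, so for $\mu$ large enough $s\sqrt{\mu/\varepsilon_\mu}$ stays strictly below the finiteness threshold of the mgf in Remark \ref{rem:correction-mgf}. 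Using that explicit formula together with the homogeneity $\Lambda(s;\beta\mu,\mu)=\mu\,\Lambda(s/\mu;\beta,1)$, which follows directly from \eqref{eq:def-Lambda}, the log-mgf becomes $x\mu\,\Lambda(u_\mu;\beta,1)+\log P_\mu(s)$, where $P_\mu(s)$ is the bounded prefactor in Remark \ref{rem:correction-mgf}. One checks that $P_\mu(s)\to 1$ as $\mu\to\infty$, hence $\varepsilon_\mu\log P_\mu(s)\to 0$.

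The core step is then a second-order Maclaurin expansion of $\Lambda(\cdot;\beta,1)$ at the origin, exactly as in the proof of Proposition \ref{prop:MD1}: $\Lambda(u_\mu;\beta,1)=\Lambda'(0;\beta,1)u_\mu+\tfrac{1}{2}\Lambda''(0;\beta,1)u_\mu^2+o(u_\mu^2)$. Multiplying by $\varepsilon_\mu x\mu$ produces $x\Lambda'(0;\beta,1)\,s\sqrt{\mu\varepsilon_\mu}+\tfrac{1}{2}x\Lambda''(0;\beta,1)\,s^2+o(1)$, where the $o(1)$ comes from $\varepsilon_\mu x\mu\cdot o(u_\mu^2)=xs^2\cdot o(1)$. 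The linear-in-$s$ term is cancelled by the centering: specialising \eqref{eq:mean-variance} to $\lambda=\beta\mu$ gives $\mathbb{E}[A_x(\beta\mu,\mu)]=x\Lambda'(0;\beta,1)+\tfrac{2}{\alpha(\beta-1)\mu}$, so
\begin{equation*}
x\Lambda'(0;\beta,1)\,s\sqrt{\mu\varepsilon_\mu}-s\sqrt{\mu\varepsilon_\mu}\,\mathbb{E}[A_x(\beta\mu,\mu)]=-\frac{2s}{\alpha(\beta-1)}\sqrt{\varepsilon_\mu/\mu}\longrightarrow 0,
\end{equation*}
in perfect analogy with the cancellation in Proposition \ref{prop:MD1}. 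The explicit value $\Lambda''(0;\beta,1)=8\beta/(\beta-1)^3$ drops out of the formula $\Lambda''(0;\lambda,\mu)=8\lambda\mu/(\lambda-\mu)^3$ already recalled there.

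There is no substantive obstacle; the only mildly delicate task is verifying that both the prefactor contribution and the Taylor remainder are of order strictly smaller than $1/\varepsilon_\mu$, so that they are killed after multiplication by the speed $\varepsilon_\mu$, but these checks are purely algebraic and follow exactly the pattern used for Proposition \ref{prop:MD1}. Once collected, G\"artner-Ellis yields the claimed LDP with good rate function $\tilde{I}_2$.
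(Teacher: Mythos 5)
Your proposal is correct and follows essentially the same route as the paper's proof: reduce to the limit of the scaled cumulant generating function via the G\"artner--Ellis Theorem, exploit the homogeneity of $\Lambda$ (equivalently, the paper's identity relating $A_x(\beta\mu,\mu)$ to $A_{x\mu}(\beta,1)$), expand $\Lambda(\cdot;\beta,1)$ to second order at the origin, and cancel the linear term against the exact mean from \eqref{eq:mean-variance}. Your explicit treatment of the prefactor $P_\mu(s)\to 1$ is a detail the paper leaves implicit, but it does not change the argument.
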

\begin{proof}
It suffices to show that
$$\lim_{\mu\to\infty}\frac{1}{1/\varepsilon_\mu}
\log\mathbb{E}\left[e^{\frac{s}{\varepsilon_\mu}\sqrt{\mu\varepsilon_\mu}
(A_x(\beta\mu,\mu)-\mathbb{E}[A_x(\beta\mu,\mu)])}\right]
=\frac{x\Lambda^{\prime\prime}(0;\beta,1)}{2}s^2\ (\mbox{for all}\ s\in\mathbb{R});$$
in fact the limit is a finite and differentiable function (with
respect to $s\in\mathbb{R}$) and, noting that
$$\tilde{I}_2(z)=\sup_{s\in\mathbb{R}}\left\{sz-\frac{x\Lambda^{\prime\prime}(0;\beta,1)}{2}s^2\right\}\ (\mbox{for all}\ z\in\mathbb{R}),$$
the desired LDP is a straightforward application of the
G\"{a}rtner Ellis Theorem.
	
We remark that
$$\frac{1}{1/\varepsilon_\mu}\log\mathbb{E}\left[e^{\frac{s}{\varepsilon_\mu}\sqrt{\mu\varepsilon_\mu}(A_x(\beta\mu,\mu)-\mathbb{E}[A_x(\beta\mu,\mu)])}\right]
=\varepsilon_\mu\left(\log\mathbb{E}\left[e^{s\frac{\sqrt{\mu}}{\sqrt{\varepsilon_\mu}}A_x(\beta\mu,\mu)}\right]
-\frac{s\sqrt{\mu}}{\sqrt{\varepsilon_\mu}}\mathbb{E}[A_x(\beta\mu,\mu)]\right);$$
moreover
$$\log\mathbb{E}\left[e^{s\frac{\sqrt{\mu}}{\sqrt{\varepsilon_\mu}}A_x(\beta\mu,\mu)}\right]
=\log\mathbb{E}\left[e^{\frac{s\mu}{\sqrt{\mu\varepsilon_\mu}}A_x(\beta\mu,\mu)}\right]
=\log\mathbb{E}\left[e^{\frac{s}{\sqrt{\mu\varepsilon_\mu}}A_{x\mu}(\beta,1)}\right]$$
where the last equality holds by the expressions of the moment generating function in Remark
\ref{rem:correction-mgf} and by \eqref{eq:def-Lambda}; then, since
$\frac{s}{\sqrt{\mu\varepsilon_\mu}}$ is close to zero for $\mu$ large enough, it is easy to
check that
\begin{multline*}
\lim_{\mu\to\infty}\frac{1}{1/\varepsilon_\mu}
\log\mathbb{E}\left[e^{\frac{s}{\varepsilon_\mu}\sqrt{\mu\varepsilon_\mu}(A_x(\beta\mu,\mu)-\mathbb{E}[A_x(\beta\mu,\mu)])}\right]\\
=\lim_{\mu\to\infty}\varepsilon_\mu\left(x\mu\Lambda\left(\frac{s}{\sqrt{\mu\varepsilon_\mu}};\beta,1\right)
-\frac{s\mu}{\sqrt{\mu\varepsilon_\mu}}\mathbb{E}[A_x(\beta\mu,\mu)]\right)\\
=\lim_{\mu\to\infty}\mu\varepsilon_\mu\left(x\Lambda\left(\frac{s}{\sqrt{\mu\varepsilon_\mu}};\beta,1\right)
-\frac{s}{\sqrt{\mu\varepsilon_\mu}}\mathbb{E}[A_x(\beta\mu,\mu)]\right).
\end{multline*}
Now we take into account the Mac Laurin formula of order 2 for the
function $\Lambda(\cdot;\beta,1)$, and we have
\begin{multline*}
\lim_{\mu\to\infty}\frac{1}{1/\varepsilon_\mu}
\log\mathbb{E}\left[e^{\frac{s}{\varepsilon_\mu}\sqrt{\mu\varepsilon_\mu}(A_x(\beta\mu,\mu)-\mathbb{E}[A_x(\beta\mu,\mu)])}\right]\\
=\lim_{\mu\to\infty}\mu\varepsilon_\mu\left(\left(x\Lambda^\prime\left(0;\beta,1\right)-\mathbb{E}[A_x(\beta\mu,\mu)]\right)\frac{s}{\sqrt{\mu\varepsilon_\mu}}
+\frac{x\Lambda^{\prime\prime}\left(0;\beta,1\right)}{2}\frac{s^2}{\mu\varepsilon_\mu}+o\left(\frac{s^2}{\mu\varepsilon_\mu}\right)\right)\\
\end{multline*}
where, by the mean value in \eqref{eq:mean-variance},
$$x\Lambda^\prime\left(0;\beta,1\right)-\mathbb{E}[A_x(\beta\mu,\mu)]=x\frac{\beta+1}{\beta-1}-\frac{2+\alpha(\beta+1)\mu x}{\alpha(\beta-1)\mu}
=-\frac{2}{\alpha(\beta-1)\mu};$$ thus
\begin{multline*}
\lim_{\mu\to\infty}\frac{1}{1/\varepsilon_\mu}
\log\mathbb{E}\left[e^{\frac{s}{\varepsilon_\mu}\sqrt{\mu\varepsilon_\mu}(A_x(\beta\mu,\mu)-\mathbb{E}[A_x(\beta\mu,\mu)])}\right]\\
=\frac{x\Lambda^{\prime\prime}\left(0;\beta,1\right)}{2}s^2
+\lim_{\mu\to\infty}-\frac{2\sqrt{\varepsilon_\mu}}{\alpha(\beta-1)\sqrt{\mu}}s+\mu\varepsilon_\mu
o\left(\frac{s^2}{\mu\varepsilon_\mu}\right)=\frac{x\Lambda^{\prime\prime}\left(0;\beta,1\right)}{2}s^2.
\end{multline*}
\end{proof}

\begin{remark}[Typical features on MD in Proposition \ref{prop:MD2}]\label{rem:MD2}
The class of LDPs in Proposition \ref{prop:MD2} fills the gap between
the two following asymptotic regimes as $\mu\to\infty$:
\begin{itemize}
\item the convergence to zero of $A_x(\beta\mu,\mu)-\mathbb{E}[A_x(\beta\mu,\mu)]$ (case $\varepsilon_\mu=1/\mu$);
\item the weak convergence of $\sqrt{\mu}(A_x(\beta\mu,\mu)-\mathbb{E}[A_x(\beta\mu,\mu)])$ to the centered Normal
distribution with variance $x\Lambda^{\prime\prime}(0;\beta,1)$ (case $\varepsilon_\mu=1$).
\end{itemize}
In both cases one condition in \eqref{eq:MD2-conditions} holds, and the other one fails.
We also note that, by taking into account the variance expression
in \eqref{eq:mean-variance}, we have
$x\Lambda^{\prime\prime}(0;\beta,1)=\lim_{\mu\to\infty}\mu\mathrm{Var}[A_x(\beta\mu,\mu)]$.
\end{remark}

We conclude this section with another moderate deviation result, which will
be stated in Proposition \ref{prop:MD2-nc}. Namely we mean a class of LDPs
that fills the gap between two asymptotic regimes, as $\mu\to\infty$,
as in Proposition \ref{prop:MD2}; more precisely the convergence to a
constant, and the weak convergence to a suitable non degenerate limit law
(this will be explained in Remark \ref{rem:MD2-nc} below). In some sense we
have a non-central moderate deviation result because the limit law is
non-Gaussian; actually, as shown in the next Lemma \ref{lem:weak-convergence},
we deal with a family of equally distributed random variables and therefore
the weak convergence trivially holds.

\begin{lemma}[A weak convergence result as $\mu\to\infty$]\label{lem:weak-convergence}
The random variables $\left\{\mu A_{x/\mu}(\beta\mu,\mu):\mu>0\right\}$ are equally
distributed.
\end{lemma}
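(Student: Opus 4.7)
The plan is to compute the moment generating function of $\mu A_{x/\mu}(\beta\mu,\mu)$ using the explicit formula from Remark \ref{rem:correction-mgf}, and show that it is independent of $\mu$. Since the distribution of a random variable on $[0,\infty)$ is determined by its moment generating function on any interval around the origin (where it is finite), this will suffice.

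The key algebraic observation is a scaling identity for the function $\Lambda$ defined in \eqref{eq:def-Lambda}: for every $\mu>0$,
$$\Lambda(s\mu;\beta\mu,\mu)=\tfrac{1}{2}\bigl(\beta\mu-\mu-\sqrt{(\beta\mu+\mu-2s\mu)^2-4\beta\mu^2}\bigr)=\mu\,\Lambda(s;\beta,1),$$
so that the exponent in the formula of Remark \ref{rem:correction-mgf} becomes
$$\tfrac{x}{\mu}\,\Lambda(s\mu;\beta\mu,\mu)=x\,\Lambda(s;\beta,1),$$
which is free of $\mu$. This is the only content of the lemma; the rest is just matching factors.

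Next I would substitute $\lambda=\beta\mu$, replace $x$ by $x/\mu$, and replace $s$ by $s\mu$ in the moment generating function formula of Remark \ref{rem:correction-mgf}. A common factor of $\mu$ can be pulled out of the denominator $2\beta\mu(\alpha-1)+\beta\mu+\mu-2s\mu+\sqrt{(\beta\mu+\mu-2s\mu)^2-4\beta\mu^2}$ and it cancels with the factor $\mu$ in the numerator $2\alpha\beta\mu$. Combined with the scaling identity above, this yields
$$\mathbb{E}\bigl[e^{s\mu A_{x/\mu}(\beta\mu,\mu)}\bigr]=\frac{2\alpha\beta\,e^{x\Lambda(s;\beta,1)}}{2\beta(\alpha-1)+\beta+1-2s+\sqrt{(\beta+1-2s)^2-4\beta}},$$
valid on the appropriate interval of $s$ (which also does not depend on $\mu$, since the threshold transforms by the same scaling). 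The right-hand side is exactly $\mathbb{E}[e^{sA_x(\beta,1)}]$, hence independent of $\mu$.

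There is no serious obstacle; the only point to be careful about is that one must check the admissible range of $s$ is preserved by the rescaling $s\mapsto s\mu$, i.e.\ that the condition $s\mu\leq\tfrac{(\sqrt{\beta\mu}-\sqrt{\mu})^2}{2}$ rewrites as $s\leq\tfrac{(\sqrt{\beta}-1)^2}{2}$, and analogously for the threshold $\hat{s}(\beta\mu,\mu,\alpha)$ in the second case of Remark \ref{rem:correction-mgf}. Both checks are immediate, and the conclusion follows by uniqueness of distributions with a common moment generating function that is finite on a neighborhood of the origin.
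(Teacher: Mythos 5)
Your proposal is correct and follows exactly the paper's argument: the paper also proves the lemma by showing that the moment generating function of $\mu A_{x/\mu}(\beta\mu,\mu)$, computed from Remark \ref{rem:correction-mgf}, does not depend on $\mu$ (the explicit cancellation you carry out is the one the paper displays in the proof of Proposition \ref{prop:MD2-nc}). Your added care about the admissible range of $s$ and the threshold $\hat{s}(\beta\mu,\mu,\alpha)=\mu\,\hat{s}(\beta,1,\alpha)$ simply fills in details the paper omits.
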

\begin{proof}
The result can be easily proved by taking the moment generating function
of the involved random variables, and by referring to the formulas presented
in Remark \ref{rem:correction-mgf}. One can easily check (we omit the details)
that, under every condition on $\alpha$ stated in Remark \ref{rem:correction-mgf},
we have the same moment generating function for every random variables of the
family $\left\{\mu A_{x/\mu}(\beta\mu,\mu):\mu>0\right\}$ (in fact it does not
depend on $\mu$).
\end{proof}

Now we can prove the non-central moderate deviation result.

\begin{proposition}[Non-central MD as $\mu\to\infty$]\label{prop:MD2-nc}
Assume that $\alpha\geq 1-\sqrt{\frac{1}{\beta}}$. Then, for every family of
positive numbers $\{\varepsilon_\mu:\mu>0\}$ such that \eqref{eq:MD2-conditions}
holds, the family
$\left\{\mu\varepsilon_\mu A_{x/(\mu\varepsilon_\mu)}(\beta\mu,\mu):\mu>0\right\}$
satisfies the LDP with speed $1/\varepsilon_\mu$, and good rate
function $I_2$ (presented in Proposition \ref{prop:LD2}).
\end{proposition}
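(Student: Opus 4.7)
The plan is to apply the Gärtner--Ellis Theorem (Theorem \ref{th:GE}) to the family $Z_\mu := \mu\varepsilon_\mu A_{x/(\mu\varepsilon_\mu)}(\beta\mu,\mu)$ with speed $v_\mu = 1/\varepsilon_\mu$, and to show that the resulting limiting cumulant generating function is exactly $x\Lambda(s;\beta,1)$, i.e.\ the same one that governs the standard LDP of Proposition \ref{prop:LD2}. Once this limit is identified, the essential smoothness of $\Lambda(\cdot;\beta,1)$ (guaranteed under $\alpha \geq 1 - \sqrt{1/\beta}$ by Remark \ref{rem:technical-detail-for-GET}) forces the rate function to be $I_2$.

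The heart of the argument is a direct moment generating function computation. Using the closed-form expression in Remark \ref{rem:correction-mgf} with parameters $(\lambda,\mu,\text{initial point}) = (\beta\mu,\mu,x/(\mu\varepsilon_\mu))$ evaluated at $s\mu$, one has
\[
\mathbb{E}\bigl[e^{(s/\varepsilon_\mu) Z_\mu}\bigr] = \mathbb{E}\bigl[e^{s\mu\, A_{x/(\mu\varepsilon_\mu)}(\beta\mu,\mu)}\bigr],
\]
finite exactly when $s\mu \leq (\sqrt{\beta\mu}-\sqrt{\mu})^2/2$, that is, when $s \leq (\sqrt{\beta}-1)^2/2$. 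The scaling identity $\Lambda(s\mu;\beta\mu,\mu) = \mu\Lambda(s;\beta,1)$, immediate from \eqref{eq:def-Lambda}, collapses the exponent $(x/(\mu\varepsilon_\mu))\Lambda(s\mu;\beta\mu,\mu)$ to $(x/\varepsilon_\mu)\Lambda(s;\beta,1)$, while the prefactor in Remark \ref{rem:correction-mgf} loses its dependence on $\mu$ because its numerator $2\alpha\beta\mu$ and its denominator $\mu\bigl[2\beta(\alpha-1)+\beta+1-2s+\sqrt{(\beta+1-2s)^2-4\beta}\bigr]$ share a common factor $\mu$. Consequently, for $s\leq(\sqrt{\beta}-1)^2/2$,
\[
\log \mathbb{E}\bigl[e^{(s/\varepsilon_\mu) Z_\mu}\bigr] = \frac{x}{\varepsilon_\mu}\Lambda(s;\beta,1) + C(s;\beta,\alpha),
\]
where $C(s;\beta,\alpha)$ does not depend on $\mu$; the quantity is $+\infty$ otherwise.

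Multiplying by $\varepsilon_\mu$ and letting $\mu\to\infty$, the term $\varepsilon_\mu C(s;\beta,\alpha)$ vanishes by $\varepsilon_\mu\to 0$, yielding
\[
\lim_{\mu\to\infty}\varepsilon_\mu \log \mathbb{E}\bigl[e^{(s/\varepsilon_\mu) Z_\mu}\bigr] = x\Lambda(s;\beta,1)
\]
for $s\leq(\sqrt{\beta}-1)^2/2$, and $+\infty$ otherwise. Under $\alpha \geq 1 - \sqrt{1/\beta}$, Remark \ref{rem:technical-detail-for-GET} asserts that $\Lambda(\cdot;\beta,1)$ is essentially smooth and lower semi-continuous, so Theorem \ref{th:GE} applies and yields the LDP with good rate function $\sup_{s\in\mathbb{R}}\{sz - x\Lambda(s;\beta,1)\} = x\Lambda^*(z/x;\beta,1) = I_2(z)$.

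I do not expect a serious obstacle. The only step deserving some care is the cancellation of the $\mu$ factors in the prefactor of the MGF, but this is essentially the content of Lemma \ref{lem:weak-convergence}: that lemma already tells us that $\mu A_{y/\mu}(\beta\mu,\mu)$ has a law $W_y$ independent of $\mu$, so $Z_\mu$ is equal in distribution to $\varepsilon_\mu W_{x/\varepsilon_\mu}$, and the display above is simply the rescaled cumulant generating function of this random variable, which simplifies cleanly because the MGF of $W_y$ is an explicit exponential in $y$ multiplied by an $s$-dependent (but $y$-independent) factor.
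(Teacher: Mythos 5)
Your proposal is correct and follows essentially the same route as the paper: both compute $\varepsilon_\mu\log\mathbb{E}[e^{s\mu A_{x/(\mu\varepsilon_\mu)}(\beta\mu,\mu)}]$ from the explicit moment generating function of Remark \ref{rem:correction-mgf}, use the scaling identity to reduce the exponent to $x\Lambda(s;\beta,1)/\varepsilon_\mu$ with a $\mu$-independent prefactor that is killed by $\varepsilon_\mu\to 0$, and conclude via the G\"artner--Ellis Theorem and the essential smoothness noted in Remark \ref{rem:technical-detail-for-GET}. Your closing remark tying the cancellation to Lemma \ref{lem:weak-convergence} is a nice extra observation but changes nothing substantive.
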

\begin{proof}
We want to apply the G\"{a}rtner Ellis Theorem. So we have
\begin{multline*}
\frac{1}{1/\varepsilon_\mu}\log\mathbb{E}\left[e^{\frac{s}{\varepsilon_\mu}\mu\varepsilon_\mu A_{x/(\mu\varepsilon_\mu)}(\beta\mu,\mu)}\right]
=\varepsilon_\mu\log\mathbb{E}\left[e^{s\mu A_{x/(\mu\varepsilon_\mu)}(\beta\mu,\mu)}\right]\\
=\left\{\begin{array}{ll}
\varepsilon_\mu\log\frac{2\alpha\beta\mu e^{x/(\mu\varepsilon_\mu)\Lambda(s\mu;\beta\mu,\mu)}}{2\beta\mu(\alpha-1)+\beta\mu+\mu-2s\mu+\sqrt{(\beta\mu+\mu-2s\mu)^2-4\beta\mu^2}}
&\ \mbox{for}\ s\mu\leq\frac{(\sqrt{\beta\mu}-\sqrt{\mu})^2}{2}\\
\infty&\ \mbox{otherwise}
\end{array}\right.\\
=\left\{\begin{array}{ll}
\varepsilon_\mu\log\frac{2\alpha\beta e^{x\Lambda(s;\beta,1)/\varepsilon_\mu}}{2\beta(\alpha-1)+\beta+1-2s+\sqrt{(\beta+1-2s)^2-4\beta}}
&\ \mbox{for}\ s\leq\frac{(\sqrt{\beta}-1)^2}{2}\\
\infty&\ \mbox{otherwise};
\end{array}\right.
\end{multline*}
then
$$\lim_{\mu\to\infty}\frac{1}{1/\varepsilon_\mu}\log\mathbb{E}\left[e^{\frac{s}{\varepsilon_\mu}\mu\varepsilon_\mu
A_{x/(\mu\varepsilon_\mu)}(\beta\mu,\mu)}\right]=x\Lambda(s;\beta,1)\ (\mbox{for all}\ s\in\mathbb{R}),$$
and, by Remark \ref{rem:technical-detail-for-GET}, the desired LDP is a
straightforward application of the G\"{a}rtner Ellis Theorem.
\end{proof}

In the next remark we follow the same lines of Remarks
\ref{rem:LD1-exposed-points-slight-extension} and
\ref{rem:LD2-exposed-points}.

\begin{remark}[A version of Proposition \ref{prop:MD2-nc} with exposed points]\label{rem:MD2-nc-exposed-points}
Here we discuss what happens when we consider the inequality
$\alpha<1-\sqrt{\frac{1}{\beta}}$ in Proposition \ref{prop:MD2-nc}.
In this case we still have to consider some items of in the second part of
Lemma \ref{lem:Legendre} (as in Remarks \ref{rem:LD1-exposed-points-slight-extension}
and \ref{rem:LD2-exposed-points}) and, by the G\"artner Ellis Theorem, we have
$$\limsup_{\mu\to\infty}\frac{1}{1/\varepsilon_\mu}\log P\left(\mu\varepsilon_\mu A_{x/(\mu\varepsilon_\mu)}(\beta\mu,\mu)\in F\right)
\leq-\inf_{z\in F}x\Lambda^*(z/x;\beta,1,\alpha)\ \mbox{for all closed sets}\ F$$
and
$$\liminf_{\mu\to\infty}\frac{1}{1/\varepsilon_\mu}\log P\left(\mu\varepsilon_\mu A_{x/(\mu\varepsilon_\mu)}(\beta\mu,\mu)\in G\right)
\geq-\inf_{z\in G\cap\mathcal{E}}x\Lambda^*(z/x;\beta,1,\alpha)\ \mbox{for all open sets}\ G$$
where $\mathcal{E}=(x\tilde{z}(\beta,1,\alpha),\infty)$ is the set of exposed points of
$x\Lambda^*(\cdot/x;\beta,1,\alpha)$. Note that
$x\tilde{z}(\beta,1,\alpha)>z_2$, and therefore
both $x\Lambda^*(\cdot/x;\beta,1,\alpha)$ and $I_2$ uniquely vanish at $z_2$.
\end{remark}

We conclude with the analogue of Remark \ref{rem:MD2}, where we also give some
comments on the limit of the scaled variance.

\begin{remark}[The analogue of Remark \ref{rem:MD2}]\label{rem:MD2-nc}
The class of LDPs in Proposition \ref{prop:MD2-nc} fills the gap between
the two following asymptotic regimes as $\mu\to\infty$:
\begin{itemize}
\item the convergence of $A_x(\beta\mu,\mu)$ to $x\Lambda^\prime(0;\beta,1)$
(case $\varepsilon_\mu=1/\mu$), which follows from the LDP in Proposition
\ref{prop:LD2};
\item the weak convergence of $\mu A_{x/\mu}(\beta\mu,\mu)$ to $A_x(\beta,1)$ (case
$\varepsilon_\mu=1$) proved in Lemma \ref{lem:weak-convergence}.
\end{itemize}
In both cases one condition in \eqref{eq:MD2-conditions} holds, and the other one fails.
We can also provide the following limit for the scaled variance (where we take into
account the variance expression in \eqref{eq:mean-variance}):
$$\lim_{\mu\to\infty}\frac{1}{\varepsilon_\mu}
\mathrm{Var}\left[\mu\varepsilon_\mu A_{x/(\mu\varepsilon_\mu)}(\beta\mu,\mu)\right]=x\Lambda^{\prime\prime}(0;\beta,1).$$
Thus the variance of the equally distributed random variables in Lemma
\ref{lem:weak-convergence} (and therefore the variance of the weak limit
$A_x(\beta,1)$) can be expressed as
$$\mathrm{Var}[A_x(\beta,1)]=\frac{4(\beta+2\beta x\alpha^2+2\alpha-1)}{(\beta-1)^3\alpha^2}
=\Delta(\alpha,\beta)+x\Lambda^{\prime\prime}(0;\beta,1),$$
where $x\Lambda^{\prime\prime}(0;\beta,1)$ is the limit value obtained above, and
$\Delta(\alpha,\beta):=\frac{4(\beta+2\alpha-1)}{(\beta-1)^3\alpha^2}>0$; moreover
$\Delta(\alpha,\beta)$ tends to zero as $\beta\to\infty$.
\end{remark}

\section{Numerical estimates by simulations}\label{sec:numerical-estimates}
In this section we refer to the asymptotic Normality result under
the scaling 2 stated in Remark \ref{rem:MD2}. We present
numerical values obtained by simulations to estimate $\beta$;
actually we assume that $\beta>\beta_0$ for some known $\beta_0>1$.
In the final part we also present some figures concerning sample
paths for some $\beta>1$.

We denote the standard Normal distribution by $\Phi$. Then, for
every $\delta>0$, we have
$$\lim_{\mu\to\infty}P\left(A_x(\beta\mu,\mu)-\frac{\delta}{\sqrt{\mu}}\leq
\mathbb{E}[A_x(\beta\mu,\mu)]\leq A_x(\beta\mu,\mu)+\frac{\delta}{\sqrt{\mu}}\right)=
2\Phi\left(\frac{\delta}{\sqrt{8\beta x/(\beta-1)^3}}\right)-1;$$
so, if we choose $\delta=\sqrt{\frac{8\beta x}{(\beta-1)^3}}\Phi^{-1}\left(\frac{1+\ell}{2}\right)$
for some $\ell\in(0,1)$, the above limit is equal to $\ell$. Thus we
can consider the following approximated confidence interval for
$\mathbb{E}[A_x(\beta\mu,\mu)]$ at the level $\ell$, when $\mu$ is
large:
$$A_x(\beta\mu,\mu)\pm\sup_{\beta>\beta_0}\sqrt{\frac{8\beta x}{(\beta-1)^3}}\frac{\Phi^{-1}\left(\frac{1+\ell}{2}\right)}{\sqrt{\mu}}.$$
We already remarked just after Proposition \ref{prop:LD2} that
$$\lim_{\mu\to\infty}\mathbb{E}[A_x(\beta\mu,\mu)]=x\Lambda^\prime(0;\beta,1)=x\frac{\beta+1}{\beta-1};$$
thus, for $\mu$ large enough ($\mu>\mu_0$, say) the approximation
$\mathbb{E}[A_x(\beta\mu,\mu)]\approx x\frac{\beta+1}{\beta-1}$ can be
adopted. Moreover, since $\sup_{\beta>\beta_0}\sqrt{\frac{8\beta x}{(\beta-1)^3}}=
\sqrt{\frac{8\beta_0x}{(\beta_0-1)^3}}$, the above approximated confidence
interval can be specified as follows:
$$A_x(\beta\mu,\mu)\pm\sqrt{\frac{8\beta_0x}{(\beta_0-1)^3}}\frac{\Phi^{-1}\left(\frac{1+\ell}{2}\right)}{\sqrt{\mu}}.$$

Then we can obtain numerical values for this confidence interval by
performing simulations of $A_x(\beta\mu,\mu)$. Specifically, the
validate simulations of $A_x(\beta\mu,\mu)$ are those performed for
selected values of $\beta$, i.e. for chosen values $\beta=\beta_*>\beta_0>1$,
for which the fraction of sample paths such that
$$x\frac{\beta_*+1}{\beta_*-1}\in\left(\overline{A}_x(\beta_*\mu,\mu)-\sqrt{\frac{8\beta_0x}{(\beta_0-1)^3}}
\frac{\Phi^{-1}\left(\frac{1+\ell}{2}\right)}{\sqrt{\mu}},
\overline{A}_x(\beta_*\mu,\mu)+\sqrt{\frac{8\beta_0x}{(\beta_0-1)^3}}
\frac{\Phi^{-1}\left(\frac{1+\ell}{2}\right)}{\sqrt{\mu}}\right),$$
where $\overline{A}_x(\beta_*\mu,\mu)$ is the simulated sample mean,
is at least $\ell$; this is also equivalent to say that
\begin{equation}\label{eq:confidence-interval-condition-for-simulated-sample-means}
x\frac{\beta_*+1}{\beta_*-1}-\sqrt{\frac{8\beta_0x}{(\beta_0-1)^3}}\frac{\Phi^{-1}\left(\frac{1+\ell}{2}\right)}{\sqrt{\mu}}
<\overline{A}_x(\beta_*\mu,\mu)<
x\frac{\beta_*+1}{\beta_*-1}+\sqrt{\frac{8\beta_0x}{(\beta_0-1)^3}}\frac{\Phi^{-1}\left(\frac{1+\ell}{2}\right)}{\sqrt{\mu}}.
\end{equation}

Thus, when $\mu$ is large, $\beta$ can be estimated by the following
items.
\begin{itemize}
\item The confidence interval at the level $\ell$, when $x<\overline{A}_x(\beta_*\mu,\mu)-
\sqrt{\frac{8\beta_0x}{(\beta_0-1)^3}}\frac{\Phi^{-1}\left(\frac{1+\ell}{2}\right)}{\sqrt{\mu}}$:
\begin{equation}\label{eq:confidence-interval-for-beta}
\left(\frac{\overline{A}_x(\beta_*\mu,\mu)+\sqrt{\frac{8\beta_0x}{(\beta_0-1)^3}}\frac{\Phi^{-1}\left(\frac{1+\ell}{2}\right)}{\sqrt{\mu}}+x}
{\overline{A}_x(\beta_*\mu,\mu)+\sqrt{\frac{8\beta_0x}{(\beta_0-1)^3}}\frac{\Phi^{-1}\left(\frac{1+\ell}{2}\right)}{\sqrt{\mu}}-x},
\frac{\overline{A}_x(\beta_*\mu,\mu)-\sqrt{\frac{8\beta_0x}{(\beta_0-1)^3}}\frac{\Phi^{-1}\left(\frac{1+\ell}{2}\right)}{\sqrt{\mu}}+x}
{\overline{A}_x(\beta_*\mu,\mu)-\sqrt{\frac{8\beta_0x}{(\beta_0-1)^3}}\frac{\Phi^{-1}\left(\frac{1+\ell}{2}\right)}{\sqrt{\mu}}-x}\right).
\end{equation}
\item The point estimation:
\begin{equation}\label{eq:point-estimation-for-beta}
\frac{\overline{A}_x(\beta_*\mu,\mu)+x}{\overline{A}_x(\beta_*\mu,\mu)-x}.
\end{equation}
\end{itemize}
Moreover, in addition to these estimators, the meaningful information carried by
these simulations concern both $\mu_0$ and $\beta_*$ for which the inequality
\eqref{eq:confidence-interval-condition-for-simulated-sample-means} is satisfied.

Now we are ready to present some numerical values. In all cases we perform
simulations by setting $x=1$ and $\beta_0=1.25$; furthermore, the size of
simulated sample paths is $10^3$ and the confidence level is $\ell=0.95$.

\begin{table}[h!]
	\begin{center}
		\caption{Numerical approximations for the confidence interval for $\beta$ varying $\alpha$}
		\label{tab:table1}
		\begin{tabular}{||c||c|c|c|c|c|c} 
			\hline
			$\alpha$ & $\mu$ &$\beta_*$ & $x\frac{\beta_*+1}{\beta_*-1}$&$\overline{A}_x(\beta_* \mu, \mu)$
			&confidence interval \eqref{eq:confidence-interval-for-beta}&point estimation \eqref{eq:point-estimation-for-beta}\\
			\hline
			0.7 & 1000 & 1.75 & 3.${\overline 6}$ & 5.15575 &(1.349423,1.772864)& 1.481261\\
			\hline
			0.8 & 1000 & 1.75 & 3.${\overline 6}$ & 4.496909 &(1.394876,2.03684)& 1.571934\\
			\hline
			0.9 & 1000 & 1.75 & 3.${\overline 6}$ & 4.03037 &(1.434939,2.367616)& 1.659985\\
			\hline
			0.925 & 1000 & 1.75 & 3.${\overline 6}$ & 3.92666 &(1.444975,2.472007)& 1.683373\\
			\hline
		\end{tabular}
	\end{center}
\end{table}

\begin{table}[h!]
	\begin{center}
		\caption{Numerical approximations for the confidence interval for $\beta$ varying $\mu$}
		\label{tab:table2}
		\begin{tabular}{c||c||c|c|c|c|c} 
			\hline
			$\alpha$ & $\mu$ &$\beta_*$ & $x\frac{\beta_*+1}{\beta_*-1}$&$\overline{A}_x(\beta_* \mu, \mu)$
			&confidence interval \eqref{eq:confidence-interval-for-beta}&point estimation \eqref{eq:point-estimation-for-beta}\\
			\hline
			0.9 & 1000 & 2 & 3 & 3.297049 &(1.517462,3.743193)& 1.870682\\
			\hline
			0.9 & 5000 & 2 & 3 & 3.292031 &(1.66817,2.257219)& 1.872588\\
			\hline
			0.9 & 10000 & 2 & 3 & 3.292868 &(1.717179,2.112946)& 1.87227\\
			\hline
			0.9 & 20000 & 2 & 3 & 3.291491 &(1.756974,2.030459)& 1.872794\\
			\hline
		\end{tabular}
	\end{center}
\end{table}

\begin{table}[h!]
	\begin{center}
		\caption{Numerical approximations for the confidence interval for $\beta$ varying $\beta_*$}
		\label{tab:table3}
		\begin{tabular}{c|c||c||c|c|c|c} 
			\hline
			$\alpha$ & $\mu$ &$\beta_*$ & $x\frac{\beta_*+1}{\beta_*-1}$&$\overline{A}_x(\beta_* \mu, \mu)$
			&confidence interval \eqref{eq:confidence-interval-for-beta}&point estimation \eqref{eq:point-estimation-for-beta}\\
			\hline
			0.8 & 1000 & 1.5 & 5 & 6.128791                      &(1.298652,1.561668)& 1.389955\\
			\hline
			0.8 & 1000 & 2 &3 & 3.678367                         &(1.470994,2.80116)& 1.746724\\
			\hline
			0.8 & 1000 & 2.5 & 2.${\overline 3}$ & 2.860925      &(1.583278,7.827018)& 2.074734\\
			\hline
			0.8 & 1000 & 2.75 & 2.$\overline{142857}$ & 2.626983 &(1.625987,34.89173)& 2.229269\\
			\hline
		\end{tabular}
	\end{center}
\end{table}


We conclude with some comments, and we also refer to the figures presented
below. In Table \ref{tab:table1}, we have increasing values of $\alpha$ and
we find decreasing values of the sample mean $\overline A_x(\beta_* \mu, \mu)$
(as expected) that tend to the asymptotic value $x\frac{\beta_*+1}{\beta_*-1}$.
Furthermore, we find also increasing values of the point estimate
\eqref{eq:point-estimation-for-beta} and wider confidence intervals.
In Table \ref{tab:table2}, for increasing values of $\mu$, we find quite stable
values for $\overline A_x(\beta_* \mu, \mu)$ and the point estimation
\eqref{eq:point-estimation-for-beta}. Moreover we obtain more and more narrow
confidence intervals as $\mu$ increases. In Table \ref{tab:table3}, it appears
evident that, for increasing values of $\beta_*$, the values of the sample mean
$\overline A_x(\beta_* \mu, \mu)$ become more accurate estimations for the
corresponding values of $x\frac{\beta_*+1}{\beta_*-1}$, whereas the right
endpoints of the confidence intervals are less accurate. We also remark that,
in all tables, the estimated values based on the point estimation
\eqref{eq:point-estimation-for-beta} are less than the corresponding set values
of $\beta_*$.

From Tables \ref{tab:table1}-\ref{tab:table3}, and all performed  simulations
results, we can say that the numerical strategy to obtain the above estimates is
reliable for high values of $\alpha$. This is easily understandable because the
above estimates are reliable in a neighborhood of the asymptotic value
$x\frac{\beta_*+1}{\beta_*-1}$ (for high value of $\mu$, i.e. for high rate of
downward steps) or, in some sense equivalently, for high value of absorbing
probability $\alpha$ (compare Figure \ref{fig_mu} and left side of Figure
\ref{fig_alfa_beta}). Furthermore, as far as the value $\mu_0$ is concerned (i.e.
the value such that we can obtain reliable estimates, at the confidence level 0.95,
when $\mu>\mu_0$), we can take $\mu_0=1000$. Furthermore, the results in Table
\ref{tab:table2} show that the approximation of the confidence interval improves
as $\mu$ increases. Finally we also stress that all these numerical values provide
indications on the true value of $\beta$ under the scaling 2 for finite values of
$\mu$ (instead of asymptotic results as $\mu\to\infty$).

We conclude with some brief comments on Figures \ref{fig_mu}-\ref{fig_alfa_beta}.
They show that sample paths of the process for different choices of values for
parameters $\mu, \, \alpha, \beta$. In Figure \ref{fig_mu} it is possible to observe
how the paths change for different values of $\mu$. In Figure \ref{fig_alfa_beta}
we consider different values of $\alpha$ and $\beta$; in particular, we set
$\mu=10$ because the effect of different values of $\alpha$ and $\beta$ on the
sample paths appears more evident.

\begin{figure}
\centering
\includegraphics[scale=0.6]{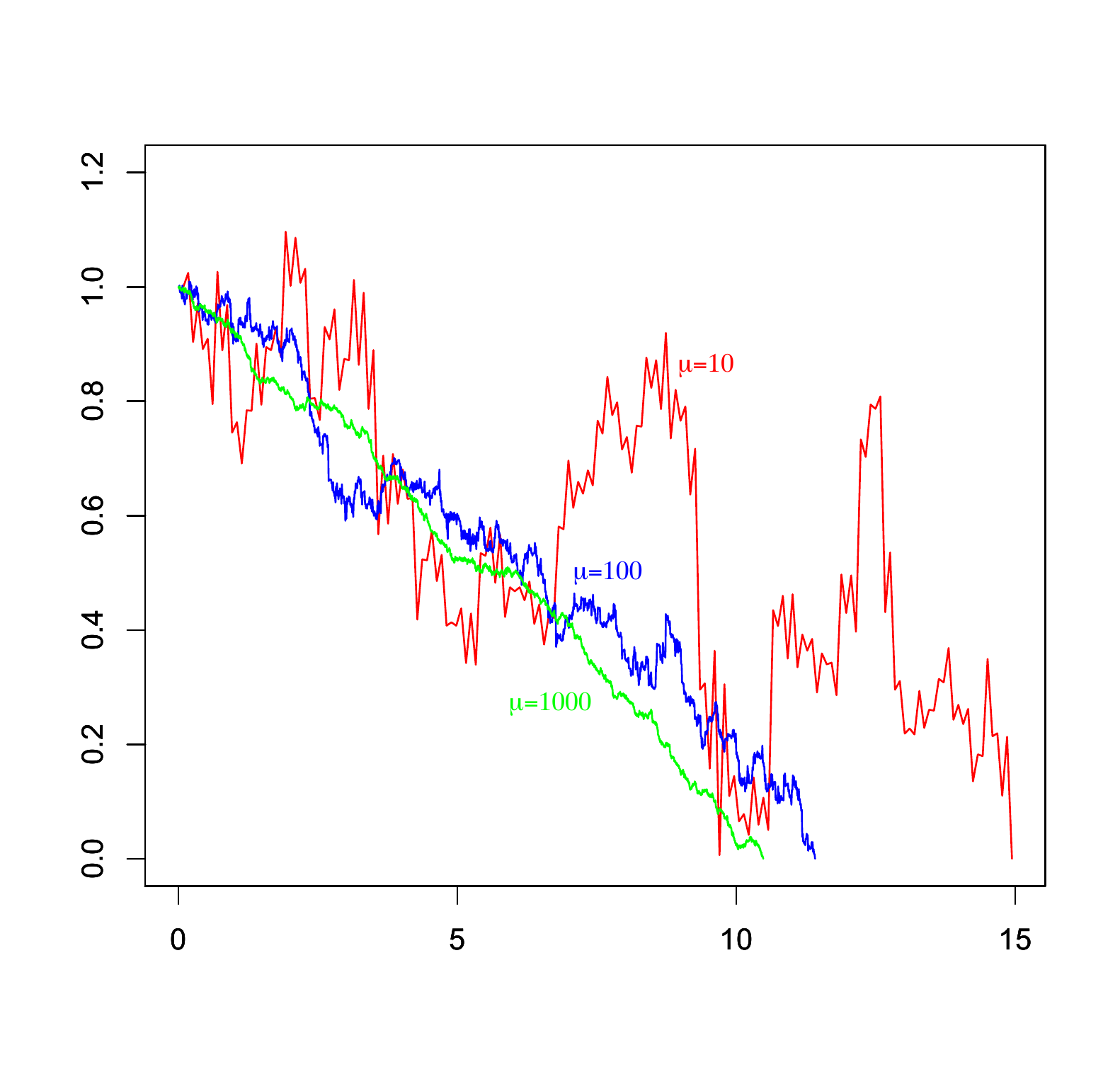}
\vspace*{-1cm}
\caption{Sample paths for different values of $\mu$, for $\alpha=0.9$ and $\beta=1.25$.}
\label{fig_mu}
\end{figure}

\begin{figure}
\centering
\includegraphics[scale=0.42]{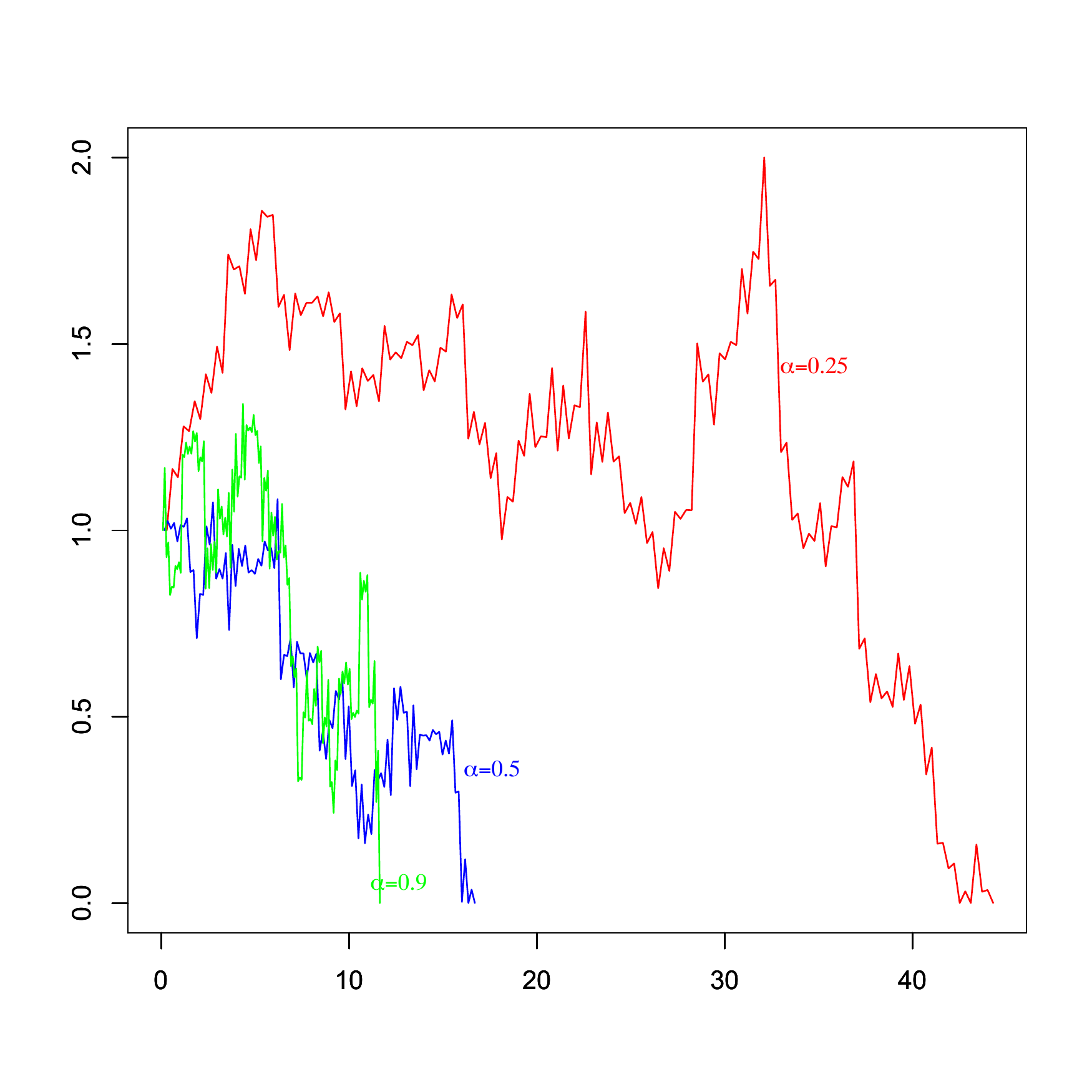}
\includegraphics[scale=0.42]{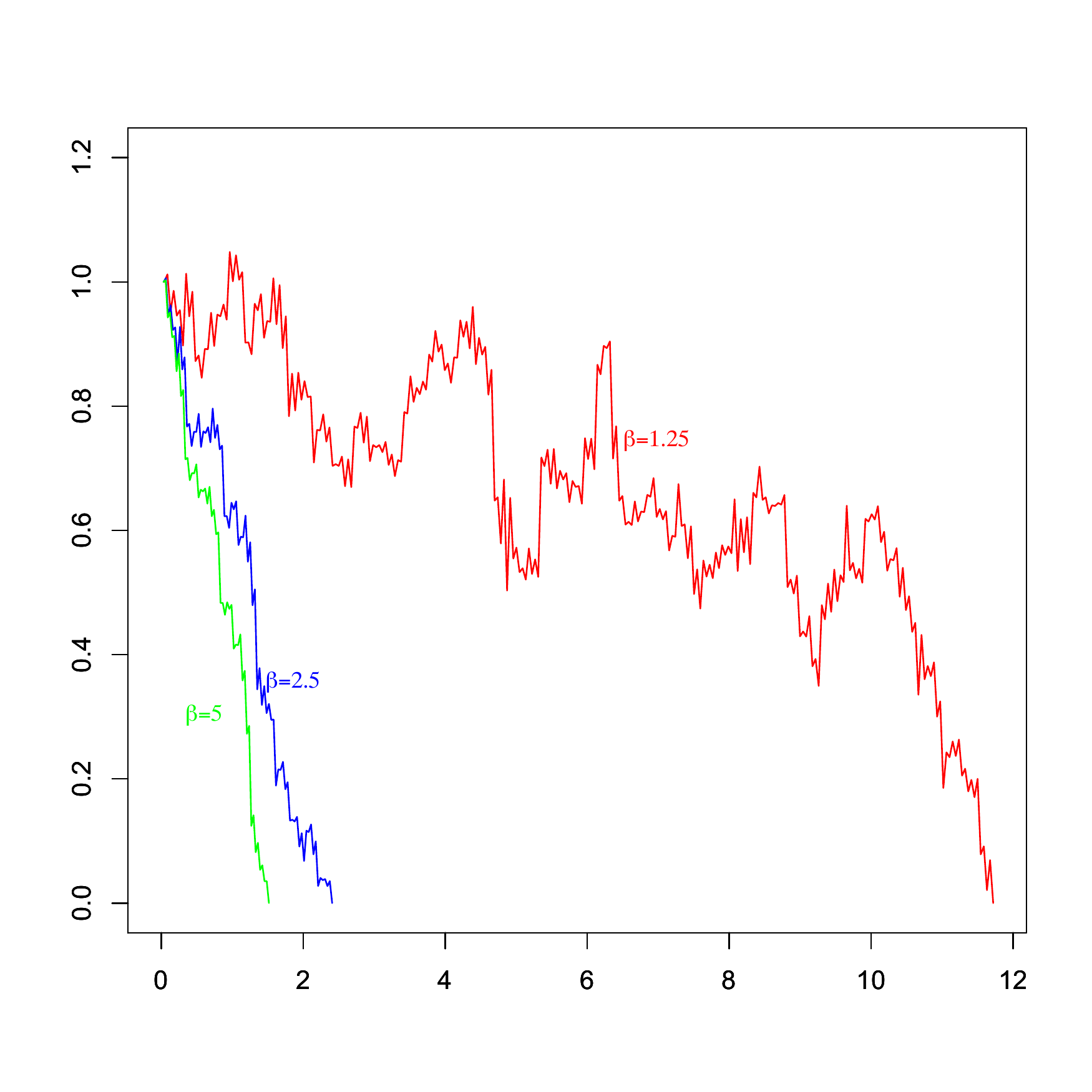}
\caption{Left: sample paths for different values of $\alpha$, for $\mu=10$ and $\beta=1.25$.
Right: the same varying $\beta$, for $\mu=20$ and $\alpha=0.9$.}\label{fig_alfa_beta}
\end{figure}

\end{document}